\newtheorem{theorem}{Theorem}[section]
\newtheorem{lemma}[theorem]{Lemma}
\newtheorem{claim}[theorem]{Claim}
\newenvironment{proof}{{\noindent\it Proof.}\quad}{\hfill $\square$\par}
\newtheorem{problem}[theorem]{Problem}
\begin{document}

% --- PAPER INFO ---

\title{Spectral extrema of graphs of given even size forbidding $H(4,3)$}
\author[1]{\small\bf Ruiling Zheng\thanks{E-mail:  rlzheng2017@163.com}}
\author[2,3]{\small\bf Gang Zhang\thanks{E-mail:  gzh\_ang@163.com}}

%     {\quad
%     {\small\bf Mengyu Cao}\thanks{email:  myucao@ruc.edu.cn}\quad
%     {\small\bf Zequn Lv}\thanks{email:  lvzq19@mails.tsinghua.edu.cn}\quad
%     {\small\bf Mei Lu}\thanks{email: lumei@tsinghua.edu.cn}\\
% }

\affil[1]{\small Center for Combinatorics and LPMC, Nankai University, China}
\affil[2]{\small School of Mathematical Sciences, Xiamen University, China}
\affil[3]{\small Institute of Mathematics, Physics and Mechanics, Ljubljana, Slovenia}
%\author{Yichen Wang}
\date{}

\maketitle\baselineskip 16.3pt

\begin{abstract}
A graph is sad to be $H$-free if it does not contain $H$ as a subgraph. Let $H(k,3)$ be the graph formed by taking a cycle of length $k$ and a triangle on a common vertex. Li, Lu and Peng [Discrete Math. 346 (2023) 113680] proved that if $G$ is an $H(3,3)$-free graph of size $m \geq 8$, then the spectral radius $\rho(G) \leq \frac{1+\sqrt{4 m-3}}{2}$ with equality if and only if $G \cong S_{\frac{m+3}{2}, 2}$, where $S_{\frac{m+3}{2}, 2}=K_2 \vee \frac{m-1}{2}K_1$. Note that the bound is attainable only when $m$ is odd. 
Recently, Pirzada and Rehman [Comput. Appl. Math. 44 (2025) 295] proved that if $G$ is an $\{H(3,3),H(4,3)\}$-free graph of even size $m \geq 10$, then $\rho(G) \leq \rho^{\prime}(m)$ with equality if and only if $G \cong S_{\frac{m+4}{2}, 2}^{-}$, where $\rho^{\prime}(m)$ is the largest root of $x^4-m x^2-(m-2) x+\frac{m}{2}-1=0$, and $S_{\frac{m+4}{2}, 2}^{-}$ is the graph obtained from $S_{\frac{m+4}{2}, 2}$ by deleting an edge incident to a vertex of degree two. In this paper, we improve the result of Pirzada and Rehman by showing that if $G$ is an $H(4,3)$-free graph of even size $m \geq 38$ without isolated vertices, then $\rho(G) \leq \rho^{\prime}(m)$ with equality if and only if $G \cong S_{\frac{m+4}{2}, 2}^{-}$.
\end{abstract}

%\textbf{AMS classification: }\textit{05C75, 05C65, 05C05}\vskip 0.3cm

{\bf Keywords:} Spectral radius; Forbidden subgraph; Extremal graph
\vskip.3cm

% -------------------------------------------------------------
% --------- Here begins INTRODUCTION SECTION ------------------
\section{Introduction}
Let $G=(V, E)$ be a simple undirected graph of {\it order} $n=|V(G)|$ and {\it size} $m=|E(G)|$. Let $A(G)$ be the adjacency matrix of $G$. Clearly, $A(G)$ is real symmetric. Hence, its eigenvalues are
real and we can arrange them as $\lambda_1(G) \geqslant \cdots \geqslant \lambda_n(G)$. The spectral radius, $\lambda(G)$, of $G$ is $\max \left\{\left|\lambda_1(G)\right|, \ldots,\left|\lambda_n(G)\right|\right\}$. Actually, it is equal to $\lambda_1(G)$ by Perron Frobenius Theorem. Moreover, there exists a unique positive eigenvector corresponding to $\rho(G)$, which is called the Perron vector of $G$.  
 For any two subsets $S,T \subseteq V(G)$, let $G[S]$ and $G-S$ denote the subgraphs of $G$ induced by $S$ and $V(G)\setminus S$, respectively. We denote by $e(S, T)$ the number of edges with one end in $S$ and the other end in $T$. For brevity, write $e(S)$ instead of $e(S, S)$. Let $N(v)$ be the set of neighbors of $v$, and $N[v]=N(v) \cup\{v\}$. Moreover, let $N_S(v)$ be the set of neighbors of $v$ in $S$ and denote $d_S(v)=\left|N_S(v)\right|$. 
 Given two vertex-disjoint graphs $G$ and $H$, $G \cup H$ denotes the {\it union} of $G$ and $H$, and $G \vee H$ denotes the {\it join} of $G$ and $H$. The graph $tG$ denotes the union of $t$ copies of $G$. Let $P_n$, $C_n$, $K_n$ and $K_{t,n-t}$ denote the path, cycle, complete graph and complete bipartite graph of order $n$, respectively. Let $S_{n, 2}$ be the graph of order $n$ and size $m$ obtained by joining each vertex of $K_2$ to $n-2$ isolated vertices. Note that $n=\frac{m+3}{2}$, and $S_{n, 2}=S_{\frac{m+3}{2}, 2}=K_2 \vee \frac{m-1}{2}K_1$. Let $S_{\frac{m+3}{2}, 2}^{-}$ be the graph obtained from $S_{\frac{m+3}{2}, 2}$ by deleting an edge incident to a vertex of degree 2.

Given a graph $H$, the graph $G$ is said to be {\it $H$-free} if $G$ does not contain $H$ as a subgraph. In 2010, Nikiforov \cite{Nikiforov2010} introduced the classic spectral Tur\'an type problem, also known as the Brualdi-Solheid-Tur\'an problem \cite{Liu2023,Nikiforov2007,Wilf1986,Zhai2022}. This problem is to determine the maximum spectral radius of an $H$-free graph with given order $n$. It is natural to ask what is the maximum spectral radius of an $H$-free graph with given size $m$, which is referred to as the Brualdi-Hoffman-Tur\'an problem \cite{Brualdi1985,LiZhaiShu2024,Lin2021,Nosal1970}.

 Let $F_{k,3}$ be the {\it friendship graph} obtained from $k$ triangles by sharing a common vertex. In 2023, Li, Lu and Peng \cite{LiLuPeng2023} proved that for any $F_{2,3}$-free graph $G$ of size $m \geq 8$, $\rho(G) \leq \frac{1+\sqrt{4 m-3}}{2}$ and the unique spectral extremal graph is $S_{\frac{m+3}{2}, 2}$. However, it is worth noting that this bound is not attainable when $m$ is even. Chen et al. \cite{Chen2025} proved that if $G$ is an $H(3,3)$-free graph of even size $m \geq 16$,  then $\rho(G) \leq$ $\rho^{\prime}(m)$, where $\rho^{\prime}(m)$ is the largest root of $x^4-m x^2-(m-2) x+\frac{m}{2}-1=0$. Min, Lou and Huang \cite{Min2022} pointed out $\rho^{\prime}(m)=\rho(S_{\frac{m+4}{2}, 2}^{-})$. In \cite{Chen2025}, the authors also proved that their equality holds if and only if $G \cong S_{\frac{m+4}{2}, 2}^{-}$.
 
 Let $H(k,l)$ be the graph formed by taking two cycles of lengths $k$ and $l$ on a common vertex. Note that $F_{2,3}=H(3,3)$. In \cite{Pirzada2024},  Rehman and Pirzada obtained that if $G$ is an $\{H(3,3), H(4,3)\}$-free graph of odd size, then $\rho(G) \leq \frac{1+\sqrt{4 m-3}}{2}$, with equality if and only if $G \cong S_{\frac{m+3}{2}, 2}$. Recently, they \cite{Pirzada2025} shown that if $G$ is an $\{H(3,3), H(4,3)\}$-free graph of even size $m \geq 10$, then $\rho(G) \leq$ $\rho^{\prime}(m)$.  Furthermore, the result of \cite{LiLuPeng2023} improved by Zhang and Wang \cite{Zhang2024}, and Yu, Li and Peng \cite{Yu2025} independently.

The aim of this paper is to determine the maximum spectral radius of graphs of given (even) size with a forbidden subgraph $H(4,3)$. Our main theorem is as follows, a proof of which is presented in Section \ref{sec:proof}. We remark that Theorem \ref{thm:even-case} generalizes the result of Rehman and Pirzada \cite{Pirzada2025}.
 
\begin{theorem}
    \label{thm:even-case}
    If $G$ is an $H(4,3)$-free graph of even size $m \geq 38$ without isolated vertices, then $\rho(G) \le \rho'(m)$ with equality if and only if $G \cong S_{\frac{m+4}{2}, 2}^{-}$.
\end{theorem}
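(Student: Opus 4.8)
The plan is to run the by-now-standard extremal spectral argument for the Brualdi--Hoffman--Tur\'an problem, feeding in the combinatorial restrictions imposed by forbidding $H(4,3)$. Let $G$ attain the maximum spectral radius $\rho=\rho(G)$ among all $H(4,3)$-free graphs of even size $m\ge 38$ without isolated vertices. Since $S_{\frac{m+4}{2}, 2}^{-}$ is such a graph and, by Min, Lou and Huang \cite{Min2022}, $\rho(S_{\frac{m+4}{2}, 2}^{-})=\rho'(m)$, we have $\rho\ge\rho'(m)$; hence it suffices to prove $\rho\le\rho'(m)$ with equality forcing $G\cong S_{\frac{m+4}{2}, 2}^{-}$. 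Writing $g(x)=x^{4}-mx^{2}-(m-2)x+\frac m2-1$, one checks $g(\sqrt m)<0<g(\sqrt m+1)$, so $\sqrt m<\rho'(m)<\sqrt m+1$, and together with $g(\rho'(m))=0$ this yields $\rho^{2}\ge\rho'(m)^{2}>m+\sqrt m-1$, a fact used throughout. A standard edge-relocation step (delete a non-bridge edge, a suitable leaf edge, or an edge of a two-vertex component after discarding the resulting isolated vertex, and re-attach it as a pendant at a vertex of maximum Perron weight --- which cannot create a copy of $H(4,3)$, since a pendant edge lies on no cycle) lets us assume $G$ is connected. Fix the Perron vector $\mathbf x$ normalized by $x_{u^{*}}=\max_v x_v=1$, and set $A=N(u^{*})$, $B=V(G)\setminus N[u^{*}]$.

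The first thing to establish is that $u^{*}$ lies on a triangle: counting walks of length two from $u^{*}$ and using $x_v\le 1$ gives $\rho^{2}\le d(u^{*})+2e(A)+e(A,B)$, and substituting $m=d(u^{*})+e(A)+e(A,B)+e(B)$ yields $\rho^{2}\le m+e(A)-e(B)$; since $\rho^{2}>m$ we get $e(A)\ge e(B)+1\ge 1$, indeed $e(A)\ge\lceil\rho^{2}-m\rceil\ge 6$. So $u^{*}$ lies on at least six triangles, and as $G$ is $H(4,3)$-free, every $4$-cycle through $u^{*}$ shares a non-$u^{*}$ vertex with every triangle through $u^{*}$. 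Next I would show $u^{*}$ lies on a $4$-cycle: if not, then no two vertices of $A$ have a common neighbour other than $u^{*}$, so $G[A]$ is a matching $M$ (with $|M|=e(A)$) and each vertex of $B$ has at most one neighbour in $A$; writing $(x_p+x_q)(\rho-1)=2+\sum_{w\in N_B(p)}x_w+\sum_{w\in N_B(q)}x_w$ for $\{p,q\}\in M$, summing over $M$, and using $\rho\sum_{w\in B}x_w\le e(A,B)+2e(B)$, one obtains
\[
\rho^{2}\ \le\ m-\left(1-\tfrac{2}{\rho-1}\right)e(A)-\left(1-\tfrac{1}{\rho-1}\right)e(A,B)-\left(1-\tfrac{2}{\rho-1}\right)e(B)\ <\ m,
\]
the coefficients being positive because $\rho>\sqrt m>3$; this contradicts $\rho^{2}>m$. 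Hence $u^{*}$ lies on a $4$-cycle $u^{*}rstu^{*}$.

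The heart of the argument is then the structure around $u^{*}$. From $H(4,3)$-freeness, $\{r,s,t\}$ must cover every edge of $G[A]$, and the same holds for the vertex-triple of \emph{every} $4$-cycle through $u^{*}$; since $G[A]$ has at least six edges and a graph in which every two edges meet is a star or a triangle, tracking these vertex covers (two independent edges of $G[A]$, or a triangle in $G[A]$, can be combined with a $4$-cycle through $u^{*}$ that uses a high-degree vertex of $G[A]$ as midpoint to build an $H(4,3)$, unless the resulting local configuration is spectrally too sparse to have $\rho\ge\rho'(m)$) should force $G[A]$ to be a star $K_{1,e(A)}$ with a centre $b^{*}$, plus isolated vertices. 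Thus $u^{*}b^{*}\in E(G)$, the vertices of $D:=A\setminus\{b^{*}\}$ adjacent to $b^{*}$ form an independent common neighbourhood of $u^{*}$ and $b^{*}$, and the remaining vertices of $A$ are pendants at $u^{*}$ within $N[u^{*}]$. I would then show $d(u^{*})$ and $e(A)$ are both $\frac m2+O(\sqrt m)$ and that $\mathbf x$ concentrates on $\{u^{*},b^{*}\}$ (so $x_{b^{*}}=1-o(1)$ and $\sum_{v\notin\{u^{*},b^{*}\}}x_v=O(1)$); feeding this into the eigenvalue equations with the size identity and further relocations of edges incident to $B$ (moving them to common neighbours of $u^{*},b^{*}$, or to pendants at $u^{*}$, which strictly increases $\rho$ unless $G$ already has the required shape, and never creates an $H(4,3)$ since $D$ stays independent and $B$ shrinks) should successively force $e(B)=0$, then $e(A\setminus\{b^{*}\},B)\le 1$, then $|B|\le 1$, and that $b^{*}$ misses at most one vertex of $D$. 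This leaves a bounded list of candidates, each obtained from $K_{2}\vee tK_{1}$ by attaching a pendant-type edge; since $m$ is even and $K_{2}\vee tK_{1}$ has odd size $2t+1$, exactly one such edge is present, and a direct comparison of the (quartic) characteristic polynomials on this short list shows the maximum of $\rho$ is attained uniquely by $K_{2}\vee\frac{m-2}{2}K_{1}$ with a pendant at a vertex of the $K_2$, i.e.\ $G\cong S_{\frac{m+4}{2}, 2}^{-}$, whence $\rho=\rho'(m)$. Retracing the inequalities shows equality throughout is possible only for this graph.

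The step I expect to be the main obstacle is the one just described: upgrading ``every $4$-cycle through $u^{*}$ yields a size-$\le 3$ vertex cover of $G[A]$'' to ``$G[A]$ is a star plus isolated vertices'' (ruling out the triangle-in-$G[A]$ and matching-number-$\ge 2$ configurations, in part via spectral-efficiency estimates for the resulting clusters), and then pinning down $B$ with enough quantitative slack to also carry out the edge-relocations. It is these quantitative estimates that should dictate the threshold $m\ge 38$; by contrast, Steps 1 and 2 need only $\rho>3$ and $e(A)\ge 6$, and the no-$4$-cycle case is conceptually separate (the local picture there is a matching, not a book) and is handled cleanly by the displayed bound above.
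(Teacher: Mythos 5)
Your setup (extremal $G$, connectivity via relocation, Perron vector normalized at $u^*$, the bound $\rho^2>m+\Theta(\sqrt m)$, hence $e(A)\geq 6$, and the clean elimination of the case where $u^*$ lies on no $4$-cycle) matches the standard framework and is essentially what the paper does in its preliminaries and Claims 3.1--3.2. But the heart of your argument is not actually supplied: the step from ``the non-$u^*$ vertices of every $4$-cycle through $u^*$ form a vertex cover of $G[A]$'' to ``$G[A]$ is a star plus isolated vertices, and then $B$ is forced to be empty/tiny'' is written entirely in the conditional (``should force'', ``should successively force''), and you yourself flag it as the main obstacle. That step is precisely where the work lies, and the paper closes it by a different and much cleaner device: a disjoint edge and $P_3$ inside $N(u^*)$ produce a triangle and a $4$-cycle meeting only at $u^*$, so $G[N(u^*)]$ is $(P_2\cup P_3)$-free; the spectral inequality (their Eq.~(3)) then forces the positive-degree part $G[A_+]$ to be connected, hence isomorphic to one of seven explicit graphs (six graphs on at most four vertices, or a star $K_{1,t}$), and each non-star case, as well as the star case with $W\neq\emptyset$, is eliminated by combining the refined inequality (their Eq.~(4)) with $\rho>\frac{1+\sqrt{4m-5}}{2}$ and a few Nikiforov-type edge rotations. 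Your proposed route via vertex covers, Perron-weight concentration ($x_{b^*}=1-o(1)$), and iterated relocations is plausible in outline but none of these quantitative claims is established, and some of the proposed relocations need verification that they preserve $H(4,3)$-freeness and the no-isolated-vertex/size constraints.

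There is a second gap at the end even if the star structure were granted. Once $G[A_+]\cong K_{1,t}$ and $W=\emptyset$, the candidates are the graphs $F'_{m,l}$ (a book $K_2\vee tK_1$ with $l$ pendant edges at $u^*$), where parity of $m$ only forces $l$ to be odd; this is not a bounded list, since $l$ ranges over all odd values with $2t+1+l=m$, and your sketch never bounds the number of pendants at $u^*$ (these vertices lie in $A_0$, not in $D$ or $B$, so ``$|B|\le 1$'' and ``$b^*$ misses at most one vertex of $D$'' do not control them). One needs a monotonicity statement in $l$ -- exactly the Fang--You lemma the paper invokes, $\rho(F'_{m,l})<\rho(F'_{m,1})$ for odd $l\ge 3$ -- to land on $S^{-}_{\frac{m+4}{2},2}$ and to get the uniqueness in the equality case. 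As it stands, the proposal is a reasonable plan sharing the paper's general strategy, but the two decisive steps (the structural classification of $G[N(u^*)]$ and the final comparison within the pendant family) are missing.
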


Let $F_{t}=K_1\vee P_{t}$ be the $fan graph$. Very recently, the spectral extrema result for the graph $F_5$ was obtained by Gao and Li \cite{Gao2026}. Specifically, they proved that if $G$ is an $F_5$-free graph of size $m \geq 88$ without isolated vertices, then $\rho(G) \le \frac{1 + \sqrt{4m - 3}}{2}$ and the equality holds if and only if $G \cong S_{\frac{m+3}{2}, 2}$. Since $H(4,3)$ is a subgraph of $F_5$, the same result applies to $H(4,3)$-free graphs. Note that $S_{\frac{m+3}{2}, 2}$ contains no subgraph isomorphic to $H(4,3)$. Then the bound $\frac{1 + \sqrt{4m - 3}}{2}$ is attainable for $H(4,3)$-free graphs of odd size $m$. For more results on the Brualdi-Hoffman-Tur\'an problem for the fan graphs, see \cite{ChenYuan2025,LiZhaoZou2025,Liu2025,Zhang2025}.

The following is an immediate consequence of the Gao-Li's result and Theorem \ref{thm:even-case}.

\begin{theorem}
    \label{thm:main-theorem}
    If $G$ is an $H(4,3)$-free graph of size $m \geq 88$ without isolated vertices, then

    \[
    \rho(G) \le 
    \begin{cases}
        \frac{1 + \sqrt{4m - 3}}{2}, & \text{if $m$ is odd}\,, \\[6pt]
        \rho'(m), & \text{if $m$ is even}\,.
    \end{cases}
    \]

    \noindent Moreover, the equality is attained if and only if 
    \[
    G \cong 
    \begin{cases}
        S_{\frac{m+3}{2}, 2}, & \text{if $m$ is odd}\,, \\[6pt]
        S_{\frac{m+4}{2}, 2}^{-}, & \text{if $m$ is even}\,.
    \end{cases}
    \]
\end{theorem}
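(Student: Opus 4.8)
\textbf{Proof proposal for Theorem~\ref{thm:main-theorem}.} The plan is to derive the statement by combining Theorem~\ref{thm:even-case} with the Gao--Li result, splitting on the parity of $m$. Since $m \geq 88$, the even case has in particular $m \geq 38$, so Theorem~\ref{thm:even-case} applies verbatim and gives $\rho(G) \leq \rho'(m)$ with equality if and only if $G \cong S_{\frac{m+4}{2},2}^{-}$; nothing further is required there. (Note that the Gao--Li bound $\frac{1+\sqrt{4m-3}}{2}$ is not attained when $m$ is even, which is exactly why Theorem~\ref{thm:even-case} is needed to pin down the sharp bound in this case.)

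For odd $m$, the first step is to observe that $H(4,3)$ is a subgraph of $F_5 = K_1 \vee P_5$. Writing $P_5 = v_1 v_2 v_3 v_4 v_5$ and letting $u$ be the apex vertex, the $4$-cycle on $u,v_1,v_2,v_3$ (with edges $uv_1,v_1v_2,v_2v_3,v_3u$) and the triangle on $u,v_4,v_5$ (with edges $uv_4,v_4v_5,v_5u$) both lie in $F_5$ and meet only in the vertex $u$, so together they form a copy of $H(4,3)$. Hence every $H(4,3)$-free graph is $F_5$-free, and the Gao--Li theorem applies to our $G$: being $F_5$-free of size $m \geq 88$ without isolated vertices, it satisfies $\rho(G) \leq \frac{1+\sqrt{4m-3}}{2}$, with equality if and only if $G \cong S_{\frac{m+3}{2},2}$.

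It remains only to check that the extremal graph $S_{\frac{m+3}{2},2}$ lies in the smaller class, i.e.\ that it is $H(4,3)$-free (the analogous fact for $S_{\frac{m+4}{2},2}^{-}$ is already built into Theorem~\ref{thm:even-case}). This is immediate from the structure $S_{n,2} = K_2 \vee (n-2)K_1$: any two non-central vertices are non-adjacent, so every triangle and every cycle of length at least $4$ must use both vertices of the central $K_2$; consequently no triangle and no longer cycle can share exactly one vertex, and $S_{n,2}$ contains no $H(4,3)$. Since the $H(4,3)$-free class is contained in the $F_5$-free class and still contains $S_{\frac{m+3}{2},2}$, both the bound and the uniqueness of the maximizer carry over unchanged, finishing the odd case and hence the theorem. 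There is no genuine obstacle here: all the difficulty sits in Theorem~\ref{thm:even-case} and in the cited Gao--Li theorem, and the only care needed is the elementary containment $H(4,3) \subseteq F_5$ together with the verification that the extremal graphs are themselves $H(4,3)$-free.
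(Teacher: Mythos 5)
Your proposal is correct and follows essentially the same route as the paper, which presents Theorem~\ref{thm:main-theorem} as an immediate consequence of the Gao--Li result (via the containment $H(4,3)\subseteq F_5$ and the observation that $S_{\frac{m+3}{2},2}$ is $H(4,3)$-free) together with Theorem~\ref{thm:even-case} for even $m$. Your explicit verification of the embedding of $H(4,3)$ in $F_5$ and of the $H(4,3)$-freeness of $S_{\frac{m+3}{2},2}$ just spells out details the paper leaves implicit.
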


\section{Preliminaries}

In this section, we shall give some preliminary results used to prove our main theorem.

\begin{lemma}[Nikiforov \cite{Nikiforov2009}]
\label{lem:Nikiforov}
Let $A$ and $A^{\prime}$ be the adjacency matrices of two graphs $G$ and $G^{\prime}$ on the same vertex set. Suppose that $N_G(u) \subsetneq N_{G^{\prime}}(u)$ for some vertex $u$. If the Perron vector of $A(G)$ corresponding to $\rho(G)$ satisfies $X^T A^{\prime} X \geq X^T A X$, then $\rho\left(G^{\prime}\right)>\rho(G)$.
\end{lemma}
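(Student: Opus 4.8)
The plan is to combine the variational (Rayleigh quotient) characterization of the spectral radius with the strict positivity of the Perron vector, obtaining the inequality $\rho(G') \ge \rho(G)$ cheaply and then upgrading it to a strict inequality by exploiting the extra neighbor at $u$. Let $X$ denote the Perron vector of $A = A(G)$, normalized so that $X^T X = 1$; as recorded in the introduction, $X$ has all coordinates strictly positive and satisfies $A X = \rho(G) X$. The first step is to write down the chain
$$\rho(G') = \max_{\|Y\|=1} Y^T A' Y \ge X^T A' X \ge X^T A X = \rho(G)\, X^T X = \rho(G),$$
in which the opening equality is the Rayleigh characterization of the largest eigenvalue of the real symmetric (nonnegative) matrix $A'$, the middle inequality is exactly the hypothesis $X^T A' X \ge X^T A X$, and the closing equalities use $A X = \rho(G) X$ together with the normalization. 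This already gives $\rho(G') \ge \rho(G)$, so the real content of the lemma is to show the inequality is strict.

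To that end I would argue by contradiction: suppose $\rho(G') = \rho(G)$. Then every inequality in the displayed chain must be an equality, and in particular $X^T A' X = \rho(G') = \lambda_1(A')$. Since $A'$ is symmetric, its Rayleigh quotient attains the value $\lambda_1(A')$ only on unit eigenvectors for the top eigenvalue; hence $X$ is itself such an eigenvector and satisfies $A' X = \rho(G') X = \rho(G) X$. Thus under the assumption of equality, the single vector $X$ is simultaneously a $\rho(G)$-eigenvector of both $A$ and $A'$.

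The final step extracts a contradiction by comparing one coordinate. Reading off the $u$-th entries of the two eigenvalue equations gives
$$(A X)_u = \sum_{v \in N_G(u)} X_v, \qquad (A' X)_u = \sum_{v \in N_{G'}(u)} X_v,$$
and both sums equal $\rho(G) X_u$. But $N_G(u) \subsetneq N_{G'}(u)$, so there is a vertex $w \in N_{G'}(u) \setminus N_G(u)$, and since $X_w > 0$ we get $(A' X)_u \ge (A X)_u + X_w > (A X)_u$, contradicting $(A' X)_u = (A X)_u$. Hence $\rho(G') = \rho(G)$ is impossible and $\rho(G') > \rho(G)$. I expect no serious obstacle in carrying this out: the only two points needing care are the equality case of the Rayleigh quotient (that a maximizer of the quotient is genuinely a top eigenvector, which is standard for symmetric matrices) and the essential use of the strict positivity of the Perron vector, which is precisely what converts the single added neighbor at $u$ into a strict coordinatewise increase and thereby forces the spectral gap.
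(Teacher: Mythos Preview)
The paper does not supply its own proof of this lemma; it is quoted as a preliminary result from Nikiforov \cite{Nikiforov2009} and used as a black box. So there is nothing to compare against, and the question is just whether your argument stands on its own.

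It does. The chain $\rho(G') \ge X^T A' X \ge X^T A X = \rho(G)$ is correct, the equality case of the Rayleigh quotient for a symmetric matrix indeed forces $X$ to be a top eigenvector of $A'$, and the coordinate comparison at $u$ gives the contradiction cleanly. The only implicit hypothesis you are leaning on is that the Perron vector $X$ of $G$ is strictly positive; this is exactly how the paper defines ``Perron vector'' in the introduction (so connectivity of $G$ is tacitly assumed there), and in every application of the lemma in the paper $G$ is the connected extremal graph, so the assumption is harmless. Your proof is the standard one and would be accepted without reservation.
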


\begin{lemma}[\cite{Min2022}]
\label{lem:extremal-graphs-spectral-radius}
For $m \geq 6$, we have $\rho\left(S_{\frac{m+4}{2}, 2}^{-}\right)>\frac{1+\sqrt{4 m-5}}{2}$.
\end{lemma}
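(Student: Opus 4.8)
The plan is to reduce the stated inequality to a single sign computation on the quartic already attached to $\rho'(m)$. Write $P(x) = x^4 - mx^2 - (m-2)x + \tfrac{m}{2} - 1$, whose largest root is, by the facts recorded in the introduction (Min, Lou and Huang \cite{Min2022}), exactly $\rho'(m) = \rho(S_{\frac{m+4}{2},2}^-)$. If one wishes to make this self-contained, it follows from the equitable partition of $S_{\frac{m+4}{2},2}^-$ into its two dominating vertices, the single degree-one pendant, and the class of degree-two common neighbours; the associated $4\times 4$ quotient matrix has characteristic polynomial $P$, and $\rho(S_{\frac{m+4}{2},2}^-)$ equals its Perron eigenvalue. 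Set $\beta = \frac{1+\sqrt{4m-5}}{2}$. Since $P$ has positive leading coefficient and $P(x)\to+\infty$ as $x\to+\infty$, it suffices to prove $P(\beta) < 0$: this forces a real root of $P$ in $(\beta,+\infty)$, and because $\rho'(m)$ is the largest (Perron) root, we conclude $\rho'(m) > \beta$, which is precisely the claim.

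First I would record the defining relation for $\beta$. From $2\beta - 1 = \sqrt{4m-5}$ we get $(2\beta-1)^2 = 4m-5$, that is $\beta^2 - \beta - (m - \tfrac{3}{2}) = 0$, so $\beta^2 = \beta + m - \tfrac{3}{2}$. This single quadratic relation is the only input needed, because it lets me rewrite every power of $\beta$ occurring in $P(\beta)$ in terms of $\beta$ and $m$ alone.

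Next I would substitute. Applying $\beta^2 = \beta + m - \tfrac{3}{2}$ once gives $m\beta^2 = m\beta + m^2 - \tfrac{3m}{2}$, and applying it twice gives $\beta^4 = (\beta + m - \tfrac{3}{2})^2 = (2m-2)\beta + (m-\tfrac{3}{2})(m-\tfrac{1}{2})$. Plugging these into $P(\beta) = \beta^4 - m\beta^2 - (m-2)\beta + \tfrac{m}{2} - 1$, the coefficient of $\beta$ is $(2m-2) - m - (m-2) = 0$, so all linear-in-$\beta$ terms cancel, and the remaining constant simplifies to $(m-\tfrac{3}{2})(m-\tfrac{1}{2}) - (m^2 - \tfrac{3m}{2}) + \tfrac{m}{2} - 1 = -\tfrac{1}{4}$. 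Hence $P(\beta) = -\tfrac{1}{4} < 0$ for every $m \geq 6$, which completes the argument along the lines described above.

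The computation is elementary, so there is no serious obstacle; the one delicate point is the cancellation of the $\beta$-terms, which is exactly what makes $P(\beta)$ collapse to the clean negative constant $-\tfrac{1}{4}$, independent of $m$. I would double-check the passage from the quotient matrix to the quartic $P$ (so that the identity $\rho'(m) = \rho(S_{\frac{m+4}{2},2}^-)$ rests on firm footing) and confirm that the Perron root of that quotient matrix coincides with the largest real root of $P$, since this coincidence is what transports the algebraic inequality $P(\beta) < 0$ back into the desired spectral statement.
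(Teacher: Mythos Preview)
Your argument is correct: the substitution $\beta^2=\beta+m-\tfrac{3}{2}$ indeed collapses $P(\beta)$ to the constant $-\tfrac{1}{4}$, and since $P$ has positive leading coefficient this forces a real root strictly larger than $\beta$, hence $\rho(S_{\frac{m+4}{2},2}^-)=\rho'(m)>\beta$. The paper itself does not prove this lemma at all; it is simply quoted from \cite{Min2022}, so there is no ``paper's own proof'' to compare against. What you have supplied is a clean self-contained verification, relying only on the identification of $\rho'(m)$ with the largest root of the quartic $P$ (which the paper also imports from \cite{Min2022} and which, as you note, follows from the equitable partition of $S_{\frac{m+4}{2},2}^-$).
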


Denote $F^{\prime}_{m,l}$ the graph of size $m$ obtained by joining one of the vertices with maximum
degree in $S_{\frac{m-l+3}{2}, 2}$ to $l$ isolated vertices. Here we should point out that in \cite{Fang2023} $F_{m,l}$ is usually used for the above graphs. However, we also have the friendship graph $F_{k,3}$. In order to avoid confusion, we adopt $F^{\prime}_{m,l}$ for $F_{m,l}$. Clearly, $F^{\prime}_{m,1}$ is the graph $S_{\frac{m+4}{2}, 2}^{-}$. Fang and You obtained the following result.

\begin{lemma}[Fang and You \cite{Fang2023}]
\label{lem:F_m,l}
If $l \geq3$ is odd and $m>l+1$, then $\rho(F^{\prime}_{m,l})<\rho(F^{\prime}_{m,1})$.
\end{lemma}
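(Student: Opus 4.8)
The plan is to reduce the comparison to the characteristic polynomials of two $4\times 4$ quotient matrices coming from equitable partitions, and then to compare their largest roots directly. Set $w=\frac{m-l-1}{2}$, which is a positive integer precisely because $l$ is odd and $m>l+1$ (so $m-l$ is odd and $\ge 3$). The graph $F'_{m,l}$ then consists of two adjacent vertices $u,v$ sharing an independent set $W$ of $w$ common neighbours, together with an independent set $L$ of $l$ pendant vertices attached only to $u$. The partition $\{u\},\{v\},W,L$ is equitable, with quotient matrix
\[
B_l=\begin{pmatrix} 0 & 1 & w & l\\ 1 & 0 & w & 0\\ 1 & 1 & 0 & 0\\ 1 & 0 & 0 & 0\end{pmatrix}.
\]
Since the partition is equitable and $F'_{m,l}$ is connected, $\rho(F'_{m,l})$ equals the Perron eigenvalue of $B_l$, i.e.\ the largest root of $\phi_l(x):=\det(xI-B_l)=x^4-(2w+l+1)x^2-2wx+lw$. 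Using $2w+l+1=m$, this is $\phi_l(x)=x^4-mx^2-(m-l-1)x+\tfrac{l(m-l-1)}{2}$; for $l=1$ it becomes $\phi_1(x)=x^4-mx^2-(m-2)x+\tfrac{m}{2}-1$, exactly the polynomial defining $\rho'(m)$. Write $\rho_1$ and $\rho_l$ for the largest roots of $\phi_1$ and $\phi_l$, so $\rho_1=\rho(F'_{m,1})=\rho'(m)$.

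Next I would exploit the clean form of the difference. A direct subtraction, using the factorization $l(m-l-1)-(m-2)=(l-1)(m-l-2)$, gives
\[
\phi_l(x)-\phi_1(x)=(l-1)\Big(x+\tfrac{m-l-2}{2}\Big).
\]
Evaluating at $x=\rho_1$, where $\phi_1(\rho_1)=0$, yields $\phi_l(\rho_1)=(l-1)\big(\rho_1+\tfrac{m-l-2}{2}\big)$. For $l\ge 3$ we have $l-1>0$, and since $m>l+1$ forces $m-l-2\ge 0$ while $\rho_1>0$, we conclude $\phi_l(\rho_1)>0$.

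Finally I must show that $\phi_l(\rho_1)>0$ actually places $\rho_1$ to the right of the \emph{largest} root of $\phi_l$, rather than in some lower interval of positivity of the quartic. Here Lemma \ref{lem:extremal-graphs-spectral-radius} enters: it gives $\rho_1=\rho(S_{\frac{m+4}{2},2}^-)>\frac{1+\sqrt{4m-5}}{2}>\sqrt m$ (the last inequality holding for all $m\ge 6$, which covers the range $m\ge l+2\ge 5$ with $m-l$ odd). I would then check that $\phi_l$ is strictly increasing on $[\rho_1,\infty)$: since $\phi_l''(x)=12x^2-2m>0$ for $x>\sqrt{m/6}$, the cubic $\phi_l'(x)=4x^3-2mx-(m-l-1)$ is increasing there, and at $x=\rho_1>\sqrt m$ one estimates $\phi_l'(\rho_1)=2\rho_1(2\rho_1^2-m)-(m-l-1)>2\sqrt m\cdot m-m=m(2\sqrt m-1)>0$. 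Hence $\phi_l'>0$ on $[\rho_1,\infty)$, so $\phi_l$ is increasing and, being positive at $\rho_1$, has no root in $[\rho_1,\infty)$. Therefore $\rho_l<\rho_1$, i.e.\ $\rho(F'_{m,l})<\rho(F'_{m,1})$.

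The main obstacle is exactly this last step: the inequality $\phi_l(\rho_1)>0$ alone does not suffice, because a quartic can be positive on several intervals, so one genuinely needs the lower bound $\rho_1>\sqrt m$ from Lemma \ref{lem:extremal-graphs-spectral-radius} together with the convexity/monotonicity estimate to pin $\rho_1$ on the rightmost, increasing branch of $\phi_l$. By contrast, computing $\phi_l$ from the equitable partition and factoring $\phi_l-\phi_1$ are routine. I also note that the hypothesis ``$l$ odd'' is used only to guarantee that $w=\frac{m-l-1}{2}$ is a positive integer (so that $F'_{m,l}$ is well defined); the inequality itself holds for every $l>1$ in the appropriate parity class.
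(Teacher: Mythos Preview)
The paper does not prove this lemma; it is quoted from Fang and You \cite{Fang2023} without argument and only invoked in the final paragraph of Section~\ref{sec:proof}. Your write-up is therefore a self-contained proof rather than something to compare against the paper, and it is correct: the equitable partition and quotient matrix $B_l$ are right, the characteristic polynomial $\phi_l(x)=x^4-mx^2-(m-l-1)x+\tfrac{l(m-l-1)}{2}$ checks out, the factorisation $\phi_l(x)-\phi_1(x)=(l-1)\bigl(x+\tfrac{m-l-2}{2}\bigr)$ is valid, and your monotonicity argument on $[\rho_1,\infty)$ goes through once $\rho_1>\sqrt m$.

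That said, the step you flag as the main obstacle can be removed. Evaluate the identity at $\rho_l$ instead of at $\rho_1$: since $\phi_l(\rho_l)=0$ one obtains
\[
\phi_1(\rho_l)=-(l-1)\Bigl(\rho_l+\tfrac{m-l-2}{2}\Bigr)<0,
\]
and because $\phi_1$ is a monic quartic with $\phi_1(x)\to+\infty$, it must have a root strictly larger than $\rho_l$; hence $\rho_1>\rho_l$. This bypasses the appeal to Lemma~\ref{lem:extremal-graphs-spectral-radius} and the derivative estimate entirely, and it works for every admissible $m$ without a separate lower-bound check.
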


Throughout the rest of this paper, we consider graphs of size $m\geq 38$ and without isolated vertices. In addition, for any $n \geq 1$, denote $[n]=\{1,2,\dots,n\}$. Let $G$ be such a graph with maximum spectral radius among all graphs that do not contain $H(4,3)$ as a subgraph. Firstly, we claim that $G$ is connected. If not, then let $H$ be a component of $G$ which satisfies $\rho\left(G\right)=\rho\left(H\right)$. Add $m-|E(H)|$ leaves to a vertex of $H$. Then we will obtain a new graph $H'\ncong G$ with $\rho\left(H'\right)>\rho\left(H\right)=\rho\left(G\right)$. However, $H'$ does not contain $H(4,3)$ as a subgraph. This is contrary to the assumption of $G$.

Let $\rho =\rho\left(G\right)$ and \textit{\textbf{x}} be the Perron vector of $A(G)$ with coordinates $x_u$ corresponding to the vertices $u \in V\left(G\right)$. Let $u^*$ be a vertex of $V(G)$ such that $x_{u^*}=\max \left\{x_u:\right. \left.u \in V\left(G\right)\right\}$, and $W=V\left(G\right) \backslash N\left[u^*\right]$. We write $N=N\left(u^*\right), A_{+}=\left\{v \in N: d_N(v) \geq\right.$ $1\}$ and $A_0=N \backslash A_{+}$ for short. Then we have
\begin{equation}\label{1}
m=|N|+e\left(A_{+}\right)+e(N, W)+e(W).
\end{equation}
Since $A\left(G\right) \textit{\textbf{x}}=\rho \textit{\textbf{x}}$, we can get
\begin{equation}\label{rhox-max}
\rho x_{u^*}=\left(A\left(G\right) \textit{\textbf{x}}\right)_{u^*}=\sum_{v \in N} x_v=\sum_{v \in A_0} x_v+\sum_{v \in A_{+}} x_v,
\end{equation}
and
\begin{equation}\label{rho2x-max}
\rho^2 x_{u^*}=|N| x_{u^*}+\sum_{v \in A_{+}} d_N(v) x_v+\sum_{w \in W} d_N(w)x_w.
\end{equation}
By the two equations above, it is deduced that
\begin{align*}
\left(\rho^2-\rho \right) x_{u^*}&=|N| x_{u^*}+\sum_{v \in A_{+}}\left(d_N(v)-1\right) x_v+\sum_{w \in W} d_N(w) x_w-\sum_{v \in A_0} x_v.
\end{align*}
Since $\displaystyle \frac{x_v}{x_{u^*}} \leq 1$ for any vertex $v \in V(G)$, and using Handshake Lemma to $G[A_{+}]$, we have
\begin{equation}\label{2}
\rho^2-\rho\leq|N|+2e\left(A_{+}\right)-\left|A_{+}\right|+e(N, W)-\sum_{v \in A_0} \frac{x_v}{x_{u^*}}.
\end{equation}

These several formulas will be frequently used in our proof, and the following lemma shows that each vertex in $W$ has degree at least two.

\begin{lemma}\label{lem2.3}
 Let $G$ be an extremal graph with maximum spectral radius among all $H(4,3)$-free graphs of size $m$. Then $d(w) \geq 2$ for every $w \in W$. 
\end{lemma}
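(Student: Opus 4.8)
The plan is to derive a contradiction from the assumption that some $w \in W$ has $d(w) \le 1$. Since $G$ has no isolated vertices and $w \in W = V(G)\setminus N[u^*]$ is not adjacent to $u^*$, if $d(w) \le 1$ then in fact $d(w) = 1$; let $w'$ be its unique neighbour. The natural move is a local surgery that is known to preserve $H(4,3)$-freeness while strictly increasing the spectral radius, contradicting the extremality of $G$. Concretely, I would delete the edge $ww'$ and add the edge $wu^*$ instead (equivalently, relocate the pendant leaf $w$ to hang off $u^*$); this keeps the size equal to $m$. To show $\rho$ strictly increases I would apply Lemma~\ref{lem:Nikiforov} (Nikiforov's lemma) with $G' = G - ww' + wu^*$: one checks $N_{G'}(w) = \{u^*\} \supseteq \{w'\} = N_G(w)$, so the hypothesis $N_G(w) \subseteq N_{G'}(w)$ holds (we may need the inclusion to be proper, which it is unless $w' = u^*$, impossible since $w \in W$). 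The inequality $X^T A' X \ge X^T A X$ for the Perron vector $X$ of $G$ reduces to $x_w\,x_{u^*} \ge x_w\,x_{w'}$, which is immediate from $x_{u^*} = \max_u x_u$. Hence $\rho(G') > \rho(G)$.

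It remains to argue that $G'$ is still $H(4,3)$-free, which is the main point requiring care. Any copy of $H(4,3)$ in $G'$ must use the new edge $wu^*$ (since $G$ itself was $H(4,3)$-free), but $w$ has degree $1$ in $G'$, so $w$ cannot lie on any cycle of $G'$; since every vertex of $H(4,3)$ lies on a cycle, $G'$ contains no $H(4,3)$. This contradicts the choice of $G$ as a spectral-extremal $H(4,3)$-free graph of size $m$, so no such $w$ exists and $d(w) \ge 2$ for every $w \in W$.

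The only genuine obstacle is making sure the surgery does not accidentally create a non-simple graph or a graph with isolated vertices: relocating the leaf $w$ to $u^*$ could delete the last edge at $w'$, turning $w'$ into an isolated vertex, but this does not affect $H(4,3)$-freeness and only removes an isolated vertex, so Lemma~\ref{lem:Nikiforov} still applies on the common vertex set (an isolated vertex contributes $0$ to $X^TAX$). One should also note $wu^* \notin E(G)$ since $w \in W$, so $G'$ is simple. Everything else is routine, and the contradiction with the maximality of $\rho(G)$ closes the argument.
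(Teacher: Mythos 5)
Your overall strategy is exactly the paper's: relocate the pendant edge, $G'=G-ww'+wu^*$, invoke Lemma~\ref{lem:Nikiforov} to get $\rho(G')>\rho(G)$, and observe that $G'$ is still $H(4,3)$-free (your cycle argument for the last point is correct and in fact more explicit than the paper's ``easy to verify''). The side remark about $w'$ possibly becoming isolated is moot, since $G$ was already shown to be connected with $m\geq 38$ edges, so $d(w')\geq 2$.

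There is, however, a concrete slip in how you apply Lemma~\ref{lem:Nikiforov}. You claim $N_{G'}(w)=\{u^*\}\supseteq\{w'\}=N_G(w)$; this inclusion is simply false, since $w'\neq u^*$ (indeed $w\in W$ forces $w'\neq u^*$, as you yourself note), so the hypothesis $N_G(u)\subsetneq N_{G'}(u)$ is \emph{not} satisfied at the vertex $w$. The lemma should instead be applied at $u^*$: one has $N_{G'}(u^*)=N_G(u^*)\cup\{w\}\supsetneq N_G(u^*)$ because $w\notin N(u^*)$, and the Rayleigh-quotient condition $X^TA'X\geq X^TAX$ is exactly the inequality $x_wx_{u^*}\geq x_wx_{w'}$ that you already verified from $x_{u^*}=\max_u x_u$. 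With the lemma invoked at $u^*$ rather than at $w$, your argument is complete and coincides with the paper's proof.
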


\begin{proof} If there exists a vertex $w\in W$ such that $d(w)=1$ and $wv\in E(G)$, then we shall consider the graph $G'=G-wv+wu^*$. According to $x_{u^*}=\max \left\{x_u: u \in V\left(G\right\}\right.$ and Lemma \ref{lem:Nikiforov}, we have $\rho(G')>\rho(G)$. It is easy to verify that $G'$ is an $H(4,3)$-free graph with size $m$ and without isolated vertices. This leads to a contradiction.
\end{proof}

\section{Proof of Theorem \ref{thm:even-case}}
\label{sec:proof}

Recall that $G$ is an extremal graph with maximum spectral radius among all $H(4,3)$-free graphs of even size $m \geq 38$, and it is connected. Since $S_{\frac{m+4}{2}, 2}^{-}$ does not contain $H(4,3)$ as a subgraph and by Lemma \ref{lem:extremal-graphs-spectral-radius}, it follows that $\rho=\rho(G) >\frac{1+\sqrt{4 m-5}}{2}$. Then we have
$$\rho^2 -\rho >m-\frac{3}{2}.$$
Combining Eqs. (\ref{1}) and (\ref{2}), we obtain
\begin{equation}\label{3}
0\leq e(W)<e\left(A_{+}\right)-\left|A_{+}\right|+\frac{3}{2}-\sum_{v \in A_0} \frac{x_v}{x_{u^*}}.
\end{equation}
Hence, we get
 \begin{equation}\label{6}
0 \leq\sum_{v \in A_0} x_v<\Big[e\left(A_{+}\right)-\left|A_{+}\right|+\frac{3}{2}-e(W)\Big]x_{u^*}.
\end{equation}
By the definition of $x_{u^*}$, and Eqs. (\ref{rhox-max}) and (\ref{6}), we have
$$
\begin{aligned}
\rho^2  x_{u^*} & =\rho \left(\sum_{v \in A_+} x_{v}+\sum_{v \in A_0} x_v\right) =\sum_{v \in A_+} \left(\sum_{u \in N(v)}x_u\right)+\rho \left(\sum_{v \in A_0} x_v\right)\\
&<\left[2e\left(A_{+}\right)+\left|A_{+}\right|+e(A_{+},W)\right]x_{u^*}+\rho \Big[e\left(A_{+}\right)-\left|A_{+}\right|+\frac{3}{2}-e(W)\Big]x_{u^*}.
\end{aligned}
$$
Thus we get
 \begin{equation}\label{4}
\rho^2 -\rho \Big[e\left(A_{+}\right)-\left|A_{+}\right|+\frac{3}{2}-e(W)\Big]<2e\left(A_{+}\right)+\left|A_{+}\right|+e(A_{+},W).
\end{equation} 
We have the following claims.
\begin{claim}
\label{claim:nonempty}
$A_{+} \neq \emptyset$
\end{claim}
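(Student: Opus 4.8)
\emph{Proof plan.} I would argue by contradiction, assuming $A_{+}=\emptyset$. Then no vertex of $N$ has a neighbour in $N$, i.e.\ $N$ is an independent set, so that $e(A_{+})=|A_{+}|=e(A_{+},W)=0$.

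Feeding these three vanishing quantities into inequality (\ref{4}) collapses its right-hand side to $0$ and its left-hand side to $\rho^{2}-\rho\bigl(\tfrac32-e(W)\bigr)$, so that $\rho^{2}<\rho\bigl(\tfrac32-e(W)\bigr)$. Since $\rho>0$ and $e(W)\ge 0$, this already forces $\rho<\tfrac32$: if $\tfrac32-e(W)>0$ we may divide by $\rho$, while if $\tfrac32-e(W)\le 0$ we would get $\rho^{2}<0$, which is impossible. On the other hand, the lower bound $\rho>\frac{1+\sqrt{4m-5}}{2}$ already recorded above (coming from the $H(4,3)$-freeness of $S_{\frac{m+4}{2},2}^{-}$, the extremality of $G$, and Lemma \ref{lem:extremal-graphs-spectral-radius}) exceeds $\frac{1+\sqrt{147}}{2}>6$ for $m\ge 38$. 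This contradicts $\rho<\tfrac32$, and hence $A_{+}\neq\emptyset$.

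I do not expect a genuine obstacle here: the claim is an immediate consequence of inequality (\ref{4}) once the ``$A_{+}$-terms'' are zeroed out, and the only minor point is the sign of $\tfrac32-e(W)$, which both cases dispatch at once. One could instead try to run the argument through (\ref{1}) and (\ref{2}) alone, but with $A_{+}=\emptyset$ those only yield $e(W)\le 1$ rather than a contradiction, so the two-line substitution into (\ref{4}) is the clean route.
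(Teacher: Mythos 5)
Your argument is correct, but it takes a slightly different route from the paper. You zero out the $A_{+}$-terms in the already-derived inequality (\ref{4}), obtaining $\rho^{2}<\rho\bigl(\tfrac32-e(W)\bigr)$ and hence $\rho<\tfrac32$, which contradicts $\rho>\frac{1+\sqrt{4m-5}}{2}>6$; this is valid, since Eq. (\ref{4}) (via Eqs. (\ref{3}) and (\ref{6})) is established before the claims and depends only on the extremality of $G$ and Lemma \ref{lem:extremal-graphs-spectral-radius}, not on Claim \ref{claim:nonempty} itself, so there is no circularity, and your case split on the sign of $\tfrac32-e(W)$ is the right way to handle the division by $\rho$. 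The paper instead returns to the eigenvalue equation (\ref{rho2x-max}): with $A_{+}=\emptyset$ the middle sum vanishes, so $\rho^{2}x_{u^*}\le\bigl[|N|+e(N,W)+e(W)\bigr]x_{u^*}=mx_{u^*}$ by Eq. (\ref{1}), giving $\rho\le\sqrt{m}<\frac{1+\sqrt{4m-5}}{2}$, a contradiction. The two proofs are comparable in length; the paper's has the minor advantage of not needing the strict inequality in Eq. (\ref{6}) or the chain of manipulations behind Eq. (\ref{4}), working directly from the second-order eigen-equation, while yours shows that Claim \ref{claim:nonempty} is literally a degenerate instance of the master inequality (\ref{4}), which is a tidy observation. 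Your parenthetical remark that Eqs. (\ref{1})--(\ref{2}) alone only give $e(W)\le 1$ is also accurate.
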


\begin{proof}
Suppose that $A_{+} = \emptyset$. Then $G[N]$ consists of isolated vertices only, that is, $e(N)=0$. Since $x_v \leq x_{u^*}$ for each $v \in V(G)$, and by Eqs. (\ref{1}) and (\ref{rho2x-max}), we have
$$
\rho^2  x_{u^*} \leq \Big[|N|+\sum_{v \in A_{+}} d_N(v)+\sum_{w \in W} d_N(w)\Big]x_{u^*} \leq \Big[|N|+e(N, W)+e(W)\Big] x_{u^*}=mx_{u^*}.
$$
Since $m \geq 38$, this further gives $\displaystyle \rho  \leq \sqrt{m}<\frac{1+\sqrt{4 m-5}}{2}$. This leads to a contradiction.
\end{proof}

\begin{claim}
    \label{claim:connected}
    $G[A_{+}]$ is connected.
\end{claim}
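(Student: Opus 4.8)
The plan is to argue by contradiction: assuming $G[A_+]$ is disconnected, I will use the edge count from (\ref{3}) to force a cycle to appear inside $G[A_+]$, and then exploit the fact that $u^*$ is adjacent to \emph{every} vertex of $A_+$ to build a copy of $H(4,3)$.

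First I would record two elementary observations. (i) Every vertex of $A_+$ has a neighbour inside $A_+$: if $v \in A_+$ then $d_N(v) \ge 1$, so $v$ has a neighbour $u \in N$; since $u$ is adjacent to $v \in N$, also $d_N(u) \ge 1$, so $u \in A_+$ and $uv$ is an edge of $G[A_+]$. Consequently every component of $G[A_+]$ has at least two vertices, hence at least one edge. (ii) From (\ref{3}), using $e(W) \ge 0$ and $\sum_{v \in A_0} x_v/x_{u^*} \ge 0$, we get $e(A_+) - |A_+| + \tfrac{3}{2} > 0$; since $e(A_+)$ and $|A_+|$ are integers, this yields $e(A_+) \ge |A_+| - 1$.

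Now suppose $G[A_+]$ has $c \ge 2$ components. Writing $r$ for the cyclomatic number (number of independent cycles) of $G[A_+]$, we have $e(A_+) = |A_+| - c + r$, so observation (ii) gives $r \ge c - 1 \ge 1$. Hence $G[A_+]$ contains a cycle $Z$, necessarily lying inside a single component, say $C_1$. Let $C_2$ be a second component and let $ab$ be an edge of $C_2$, which exists by (i). Pick three consecutive vertices $v_1, v_2, v_3$ along $Z$. Since $A_+ \subseteq N(u^*)$, the vertex $u^*$ is adjacent to each of $v_1, v_2, v_3, a, b$; therefore $u^* v_1 v_2 v_3 u^*$ is a $4$-cycle and $u^* a b u^*$ is a triangle, and these two cycles meet only in $u^*$. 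The six vertices $u^*, v_1, v_2, v_3, a, b$ are pairwise distinct: $u^* \notin A_+$, while $\{v_1, v_2, v_3\} \subseteq C_1$ and $\{a, b\} \subseteq C_2$ with $C_1 \ne C_2$. Thus $G$ contains $H(4,3)$, contradicting the choice of $G$. Hence $c = 1$ and $G[A_+]$ is connected.

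I do not anticipate a serious obstacle here: the only places needing care are the integrality step producing $e(A_+) \ge |A_+| - 1$ and the verification that $Z$ and the edge $ab$ can be taken vertex-disjoint, which is automatic once they lie in different components. (An alternative route would be a switching argument via Lemma \ref{lem:Nikiforov}, relocating the edges of a ``light'' component onto the ``heavy'' one; but then one must recheck $H(4,3)$-freeness after the move, which is more delicate than the direct construction above.) The conceptual point is simply that, because $u^*$ dominates $A_+$, any cycle inside $A_+$ yields a $4$-cycle through $u^*$, while a triangle through $u^*$ is supplied for free by any second component.
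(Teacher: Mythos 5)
Your proof is correct and rests on the same two ingredients as the paper's: inequality (\ref{3}) (which, by integrality, forces $e(A_+)\ge |A_+|-1$, i.e.\ rules out $G[A_+]\cong kP_2$ with $k\ge 2$) and the observation that a $P_3$ in one component of $G[A_+]$ together with an edge in another component, joined through $u^*$, produces a copy of $H(4,3)$. The only difference is the direction of the argument: the paper first uses $(P_2\cup P_3)$-freeness of $G[A_+]$ to reduce a disconnected $G[A_+]$ to $kP_2$ and then contradicts (\ref{3}) numerically, whereas you first use (\ref{3}) to force a cycle (via the cyclomatic number) and then exhibit $H(4,3)$ directly — a contrapositive reorganization of the same proof.
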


\begin{proof}
Since $G$ is $H(4,3)$-free, it is easy to see that $G[A_{+}]$ is a $(P_{2}\cup P_{3})$-free graph. If $G[A_{+}]$ is disconnected, then by Claim \ref{claim:nonempty}, $G[A_{+}]$ is isomorphic to $kP_{2}$ for $k\geq 2$. Due to Eq. (\ref{3}) and $e(A_+)-|A_+| = -k \leq -2$, we have
\begin{equation*}
e(W)<e\left(A_{+}\right)-\left|A_{+}\right|+\frac{3}{2}-\sum_{v \in A_0} \frac{x_v}{x_{u^*}}\leq -2+\frac{3}{2}-\sum_{v \in A_0} \frac{x_v}{x_{u^*}}<0,
\end{equation*}
and this contradicts the fact $e(W) \geq 0$.
\end{proof}

Let $H_1,H_2,\dots,H_7$ be the graphs shown in Figure \ref{fig1}. The following result is derived from Claim \ref{claim:connected} and the observation that $G[A_{+}]$ is a $(P_{2}\cup P_{3})$-free graph.
\begin{claim}
    \label{claim:Hi}
    $G[A_{+}] \cong H_i$ for some $i \in [7]$. 
\end{claim}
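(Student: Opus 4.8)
The plan is to recognise Claim \ref{claim:Hi} as a purely graph-theoretic fact: every connected graph $F$ containing no $P_2\cup P_3$ as a subgraph is either a star $K_{1,t}$ for some $t\ge 1$ or one of finitely many small graphs, and that these are exactly $H_1,\dots,H_7$ of Figure \ref{fig1}. Since Claim \ref{claim:connected} gives that $G[A_{+}]$ is connected, and the $H(4,3)$-freeness of $G$ (using that $u^{*}$ is adjacent to every vertex of $A_{+}$, so that a $P_2\cup P_3$ in $G[A_{+}]$ would yield a copy of $K_1\vee(P_2\cup P_3)\supseteq H(4,3)$ in $G$) forces $G[A_{+}]$ to be $(P_2\cup P_3)$-free, the claim will follow immediately from this classification.

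First I would dispose of the case $|A_{+}|\le 4$. A graph on at most four vertices trivially contains no $P_2\cup P_3$, which has five vertices; so here $G[A_{+}]$ may be an arbitrary connected graph on $2$, $3$, or $4$ vertices, and enumerating these yields exactly $K_2=K_{1,1}$, $P_3=K_{1,2}$, $K_3$, $P_4$, $K_{1,3}$, $C_4$, the paw ($K_3$ with a pendant edge), the diamond ($K_4$ minus an edge), and $K_4$. The six non-star graphs among these, together with the star $K_{1,t}$, are precisely $H_1,\dots,H_7$.

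Next I would treat $n:=|A_{+}|\ge 5$ and show $G[A_{+}]=K_{1,n-1}$. Let $P=v_1v_2\cdots v_k$ be a longest path in $G[A_{+}]$. If $k\ge 5$, then $v_1v_2v_3$ spans a $P_3$ vertex-disjoint from the edge $v_4v_5$, so $G[A_{+}]\supseteq P_2\cup P_3$, a contradiction. If $k\le 3$, then $G[A_{+}]$ has no $P_4$-subgraph; a connected graph on at least two vertices with no $P_4$-subgraph is a star or a triangle (a quick longest-path argument), and since $n\ge 5$ it must be the star $K_{1,n-1}$, as wanted. The case $k=4$ is the one needing a little care: write $P=v_1v_2v_3v_4$ and pick $u\in V(G[A_{+}])\setminus V(P)$, which exists since $n\ge 5$; maximality of $P$ forces $u\nsim v_1$ and $u\nsim v_4$. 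If $u\sim v_2$ then $v_1v_2u$ is a $P_3$ disjoint from $v_3v_4$; if $u\sim v_3$ then symmetrically $v_4v_3u$ is a $P_3$ disjoint from $v_1v_2$; and if $u$ is adjacent to none of $v_1,\dots,v_4$, then by connectedness $u$ has a neighbour $w\notin V(P)$, so the edge $uw$ is disjoint from the $P_3$ on $v_1v_2v_3$. In every subcase $G[A_{+}]\supseteq P_2\cup P_3$, a contradiction. Hence no connected $(P_2\cup P_3)$-free graph on at least five vertices other than a star exists, which completes the classification.

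I do not expect a serious obstacle here; the argument is elementary and self-contained. The points needing attention are the auxiliary statement ``connected $+$ no $P_4$-subgraph $\Rightarrow$ star or triangle'', the exhaustiveness of the list of connected graphs on at most four vertices, and the bookkeeping in the $k=4$ subcase to be sure the fifth vertex cannot be attached so as to lengthen $P$; finally one must check that the resulting small graphs and the star do match the seven pictures in Figure \ref{fig1}.
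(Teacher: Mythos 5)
Your proposal is correct and follows essentially the paper's route: the paper derives Claim \ref{claim:Hi} directly from Claim \ref{claim:connected} together with the observation that $G[A_{+}]$ is $(P_2\cup P_3)$-free (since $u^*$ dominates $A_+$ and $K_1\vee(P_2\cup P_3)\supseteq H(4,3)$), leaving the classification of connected $(P_2\cup P_3)$-free graphs implicit. You simply supply that omitted classification (stars $K_{1,t}$ plus the six non-star connected graphs on at most four vertices), and your longest-path argument is sound and matches the seven graphs $H_1,\dots,H_7$ used later in the paper ($P_4$, $C_3$, $C_4$, paw, $K_4-e$, $K_4$, $K_{1,t}$).
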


\begin{figure}[H]
  \centering
    \includegraphics[width=9.5cm]{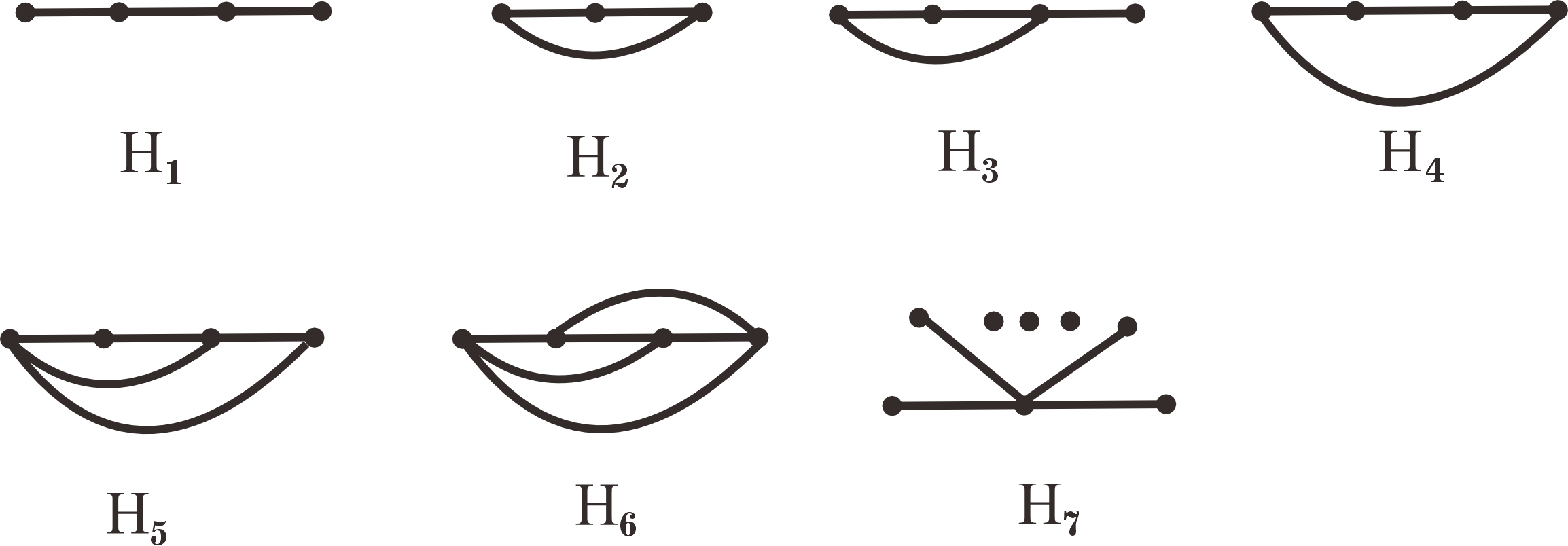}
  \caption{\small The graphs $H_1,H_2,\dots,H_7$.}
  \label{fig1}
\end{figure}

\begin{claim}
\label{claim:A+:1-6}
    $G[A_{+}]\ncong H_{i}$ for each $i \in [6]$.
\end{claim}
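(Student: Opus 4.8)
The plan is to pit the lower bound $\rho>\frac{1+\sqrt{4m-5}}{2}$ — which holds because $S_{\frac{m+4}{2}, 2}^{-}$ is $H(4,3)$-free of size $m$ (Lemma~\ref{lem:extremal-graphs-spectral-radius}) and which exceeds $\frac{13}{2}$ once $m\ge 38$, since then $4m-5\ge 147>144$ — against the fact that each $H_i$ with $i\in[6]$ has at most four vertices and at most six edges. The starting point is a uniform upper estimate for $\rho$: dividing (\ref{rhox-max}) by $x_{u^*}$ gives $\rho=\sum_{v\in A_0}\frac{x_v}{x_{u^*}}+\sum_{v\in A_+}\frac{x_v}{x_{u^*}}$, and combining $x_v\le x_{u^*}$ with the bound $\sum_{v\in A_0}\frac{x_v}{x_{u^*}}<e(A_+)-|A_+|+\frac{3}{2}$ extracted from (\ref{3}), I obtain
\[
\rho<e(A_+)-|A_+|+\frac{3}{2}+|A_+|=e(A_+)+\frac{3}{2}.
\]

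Five of the six graphs $H_1,\dots,H_6$ (namely $K_3$, $P_4$, the paw, $C_4$ and the diamond) have at most five edges, so if $G[A_+]$ is isomorphic to one of them then $\rho<\frac{13}{2}<\frac{1+\sqrt{147}}{2}\le\frac{1+\sqrt{4m-5}}{2}<\rho$, a contradiction. Hence the only surviving possibility is $G[A_+]\cong K_4$, and disposing of it is the crux of the argument.

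Assume $G[A_+]\cong K_4$ on $\{a,b,c,d\}$, so $e(A_+)=6$ and $|A_+|=4$. Since any vertex of $A_0$ adjacent to a vertex of $N$ would itself lie in $A_+$, there are no edges between $A_+$ and $A_0$; thus each vertex of $A_+$ has exactly three neighbours inside $A_+$, the neighbour $u^*$, and possibly some neighbours in $W$. The key combinatorial step is that $d_N(w)\le 1$ for every $w\in W$. Indeed, if $w\in W$ had two distinct neighbours $z_1,z_2\in N$, then $u^*z_1wz_2u^*$ is a $4$-cycle, and picking any two vertices $p,q$ of $\{a,b,c,d\}\setminus\{z_1,z_2\}$ — possible because $|\{a,b,c,d\}|=4$ — the triangle $u^*pqu^*$ meets this $4$-cycle only in $u^*$, yielding a copy of $H(4,3)$, a contradiction. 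Then Lemma~\ref{lem2.3} forces $d_W(w)\ge 1$ for every $w\in W$, so $G[W]$ has no isolated vertex and $|W|\le 2e(W)$; and (\ref{3}) gives $e(W)<6-4+\frac{3}{2}=\frac{7}{2}$, whence $e(W)\le 3$, $|W|\le 6$ and $e(A_+,W)\le\sum_{w\in W}d_N(w)\le|W|\le 6$.

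Finally, I would feed this into the Perron equation on $A_+$. For each $v\in A_+$,
\[
\rho x_v=x_{u^*}+\sum_{u\in A_+\setminus\{v\}}x_u+\sum_{w\in N_W(v)}x_w\le x_{u^*}+\sum_{u\in A_+\setminus\{v\}}x_u+d_W(v)\,x_{u^*};
\]
summing over the four vertices of $A_+$ and writing $s=\sum_{v\in A_+}x_v/x_{u^*}$ gives $\rho s\le 3s+4+e(A_+,W)\le 3s+10$, i.e. $s\le\frac{10}{\rho-3}$. Since $\rho=\sum_{v\in A_0}\frac{x_v}{x_{u^*}}+s<\frac{7}{2}+s$ by (\ref{3}) and $\rho>3$, this gives $(\rho-\frac{7}{2})(\rho-3)<10$, so $\rho<\frac{13+\sqrt{161}}{4}<\frac{1+\sqrt{147}}{2}\le\frac{1+\sqrt{4m-5}}{2}<\rho$, the desired contradiction. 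The main obstacle is precisely this $K_4$ case: one must both pin down the combinatorial fact $d_N(w)\le 1$ by explicitly exhibiting an $H(4,3)$, and then recover the extra factor $s\le 10/(\rho-3)$ from the eigenvector equation, because the crude estimate $\rho<e(A_+)+\frac{3}{2}=\frac{15}{2}$ is by itself too weak here (it fails to exclude $K_4$ for $m$ up to about $50$), and it is exactly in tightening this that the hypothesis $m\ge 38$ is used.
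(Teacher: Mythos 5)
Your proof is correct, but it takes a genuinely different route from the paper's. The paper splits on $e(A_+,W)\le 3$ versus $e(A_+,W)\ge 4$ and works with the quadratic inequality (\ref{4}); in the second case it needs a fairly long structural analysis of $W$ (the observations (i)--(v), the subcases 2.1 and 2.2, and the explicit configurations $G_1$ and $G_2$) before reaching numerical contradictions. You instead extract from (\ref{rhox-max}) and (\ref{3}) the linear bound $\rho<e(A_+)+\frac{3}{2}$, which at one stroke eliminates every $H_i$ with at most five edges once $\rho>\frac{1+\sqrt{4m-5}}{2}\ge\frac{1+\sqrt{147}}{2}>\frac{13}{2}$; this is legitimate because all of $H_1,\dots,H_6$ live on at most four vertices (connected $(P_2\cup P_3)$-free graphs on five or more vertices are stars, i.e.\ $H_7$), so only $K_4$ survives. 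Your treatment of the $K_4$ case is also sound and is where the real divergence lies: the observation that $d_N(w)\le 1$ for every $w\in W$ (the exhibited $4$-cycle $u^*z_1wz_2u^*$ together with a triangle on $u^*$ and two untouched vertices of the clique is a valid copy of $H(4,3)$, since $A_+$ has four vertices), combined with Lemma \ref{lem2.3} and $e(W)\le 3$ from (\ref{3}), gives $e(A_+,W)\le|W|\le 2e(W)\le 6$; summing the eigenvector equation over the four clique vertices (using that $A_0$ sends no edges to $A_+$) then yields $s\le\frac{10}{\rho-3}$ with $s=\sum_{v\in A_+}x_v/x_{u^*}$, and together with $\rho<\frac{7}{2}+s$ this forces $(\rho-\frac{7}{2})(\rho-3)<10$, i.e.\ $\rho<\frac{13+\sqrt{161}}{4}\approx 6.42<\frac{1+\sqrt{147}}{2}$, a contradiction already at $m\ge 38$. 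What your approach buys is brevity and robustness: no classification of $G[W]$, no auxiliary graphs, and numerics that work at the same threshold $m\ge 38$; what the paper's longer analysis buys is explicit information about the near-extremal configurations (e.g.\ $G_1$, $G_2$), which is not needed for the claim itself. The only cosmetic caution is that you name $H_1,\dots,H_6$ without the figure; as noted, the argument only needs that every non-star candidate other than $K_4$ has at most five edges, which is immediate from the four-vertex bound, so no gap results.
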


\begin{proof}
For a contradiction, suppose that $G[A_{+}]\cong H_{i}$ for some $i \in [6]$. Due to Eq. (\ref{3}), we have
\begin{equation}\label{e(W)}
0\leq e(W)\leq\left\{\begin{array}{l}
0, \ \ \ \text{if}~ G[A_{+}]\cong H_{1}, \\
1,\ \ \ \text{if}~ G[A_{+}]\in \{H_{2},H_{3},H_{4}\},\\
2,\ \ \ \text{if}~ G[A_{+}]\cong H_{5},\\
3,\ \ \ \text{if}~ G[A_{+}]\cong H_{6}. \end{array}\right.
\end{equation} 
We distinguish our proof into the following two cases. 

\vspace{3mm}
\noindent \textbf{Case 1.} $e(A_+,W) \leq 3$.
\vspace{3mm}

By Eq. (\ref{e(W)}) and $e(A_+,W) \leq 3$, we have $\displaystyle e\left(A_{+}\right)-\left|A_{+}\right|+\frac{3}{2}-e(W) \leq \frac{7}{2}$ and $2e\left(A_{+}\right)+\left|A_{+}\right|+e(A_{+},W) \leq 19$. Substituting them into Eq. (\ref{4}), we have
\begin{equation*}
\rho^2 -\frac{7\rho}{2}<19. 
\end{equation*}
However, since $m \geq 38$ and $\rho >\frac{1+\sqrt{4 m-5}}{2}$, we have
\begin{equation*}
19<\left(\frac{1+\sqrt{4 m-5}}{2}\right)^2 -\frac{7(1+\sqrt{4 m-5})}{4}<\rho^2 -\frac{7\rho}{2}<19,
\end{equation*}
a contradiction.

% \begin{equation*}\tcr{
% \rho^2 -\frac{7\rho}{2}\leq\rho^2 -\rho \Big[e\left(A_{+}\right)-\left|A_{+}\right|+\frac{3}{2}-e(W)\Big]<2e\left(A_{+}\right)+\left|A_{+}\right|+e(A_{+},W)\leq19<\Big[\frac{(1+\sqrt{4 m-5}}{2}\Big]^2 -\frac{7(1+\sqrt{4 m-5})}{4}}
% \end{equation*} for $m\geq 38$, which is again a contradiction.

\vspace{3mm}
\noindent \textbf{Case 2.} $e(A_+,W) \geq 4$.
\vspace{3mm}

By $e(A_+,W) \geq 4$, we have $W \neq \emptyset$. Let $w$ be an arbitrary vertex of $W$. By the structure of $G[A_+]$, and since $G$ is $H(4,3)$-free, it is easy to see that,
\begin{enumerate}[label=(\roman*), ref=(\roman*)]
    \item \label{item:first} $d_{A_0}(w) \leq 1$,

   \item \label{item:second} if $d_{A_0}(w)\geq 1$, then $d_{A_+}(w)=0$,

    \item \label{item:third} and if $d_W(w)=0$, then we have $d_{A_0}(w) =0$ and $d(w)=d_{A_+}(w) \geq 2$ by Lemma \ref{lem2.3} and items \ref{item:first}, \ref{item:second}.
\end{enumerate}

\noindent Moreover, let $w_1,w_2$ and $v_1,v_2$ be arbitrary two vertices in $W$ and $A_+$, respectively. It is easy to check

\begin{enumerate}[label=(\roman*), ref=(\roman*), resume]
    \item \label{item:fourth} if $w_1w_2 \in E(G)$, then $d_{\{w_1,w_2\}}(v) \leq 1$ for each $v \in A_+$,
    
    \item \label{item:fifth} and if $w_1w_2,v_1v_2 \in E(G)$, then there exists $i \in [2]$ such that $d_{\{v_1,v_2\}}(w_i)=0$ or $d_{\{w_1,w_2\}}(v_i)=0$.
\end{enumerate}

\vspace{3mm}
\noindent \textbf{Subcase 2.1.} For some $w^* \in W$, $d_{A_+}(w^*) \geq 2$.
\vspace{3mm}

If $G[A_+] \cong H_1 \cong P_4$, then we assume that $H_1:=v_1v_2v_3v_4$. One can easily verify that $d_{A_+}(w^*) = 2$ and $N_{A_+}(w^*)=\{v_2,v_3\}$. Since $e(W)=0$ and $e(A_+,W) \geq 4$, there exists another vertex $w' \in W$ with $N_{A_+}(w')=\{v_2,v_3\}$. Now $\{u^*,v_1,v_2,w^*,w',v_3\}$ forms a graph which contains $H(4,3)$ as a subgraph, a contradiction.

If $G[A_+] \cong H_2 \cong C_3$, then $2 \leq d_{A_+}(w^*) \leq 3$. Suppose that there exists another vertex $w' \in W$ with $d_{A_+}(w') \geq 2$. Then $w^*$ and $w'$ at least have a common neighbor in $A_+$, and $\{w^*,w',u^*\} \cup A_+$ forms a graph that contains $H(4,3)$ as a subgraph, a contradiction. So, for each $w \in W\setminus \{w^*\}$, we may assume that $d_{A_+}(w) \leq 1$. Since $e(A_+,W) \geq 4$, there must exist a vertex $w_1 \in W\setminus \{w^*\}$ with $d_{A_+}(w_1) = 1$. By Items \ref{item:second} and \ref{item:fifth}, we know $d_{A_0}(w_1)=0$ and $w^*w_1 \notin E(G)$. Combining Lemma \ref{lem2.3}, Item \ref{item:third} and Eq. (\ref{e(W)}), it is derived that $W=\{w^*,w_1,w_2\}$, where $w_2$ is the third vertex of $W$ and $w'w'' \in E(G)$, i.e., $e(W)=1$. Hence, $e(A_+,W) = 4$ and $d_{A_+}(w^*)=3$. Furthermore, Item \ref{item:first} and Lemma \ref{lem2.3} imply $d_{A_0}(w_2)=1$ and $|A_0| \geq 1$. Now the graph $G$ is determined, see Figure \ref{fig2}, $G_1$.

Then, by Eq. (\ref{4}), and since $m \geq 38$ and $\rho >\frac{1+\sqrt{4 m-5}}{2}$, we get \begin{equation}\label{eq:contradiction-13}
13<\left(\frac{1+\sqrt{4 m-5}}{2}\right)^2 -\frac{1+\sqrt{4 m-5}}{4}<\rho^2 -\frac{\rho}{2}<13,
\end{equation}
a contradiction.

If $G[A_+] \cong H_i$ for $i \in \{3,4,5,6\}$, then $\{w^*,u^*\} \cup A_+$ forms a graph which contains $H(4,3)$ as a subgraph. This leads to a contradiction. 
% Since $m \geq 37$ and $\rho >\frac{1+\sqrt{4 m-5}}{2}$, we have
% \begin{equation}
% 13<\left(\frac{1+\sqrt{4 m-5}}{2}\right)^2 -\frac{1+\sqrt{4 m-5}}{4},
% \end{equation}
% a contradiction.

\begin{figure}[H]
  \centering
    \includegraphics[width=8cm]{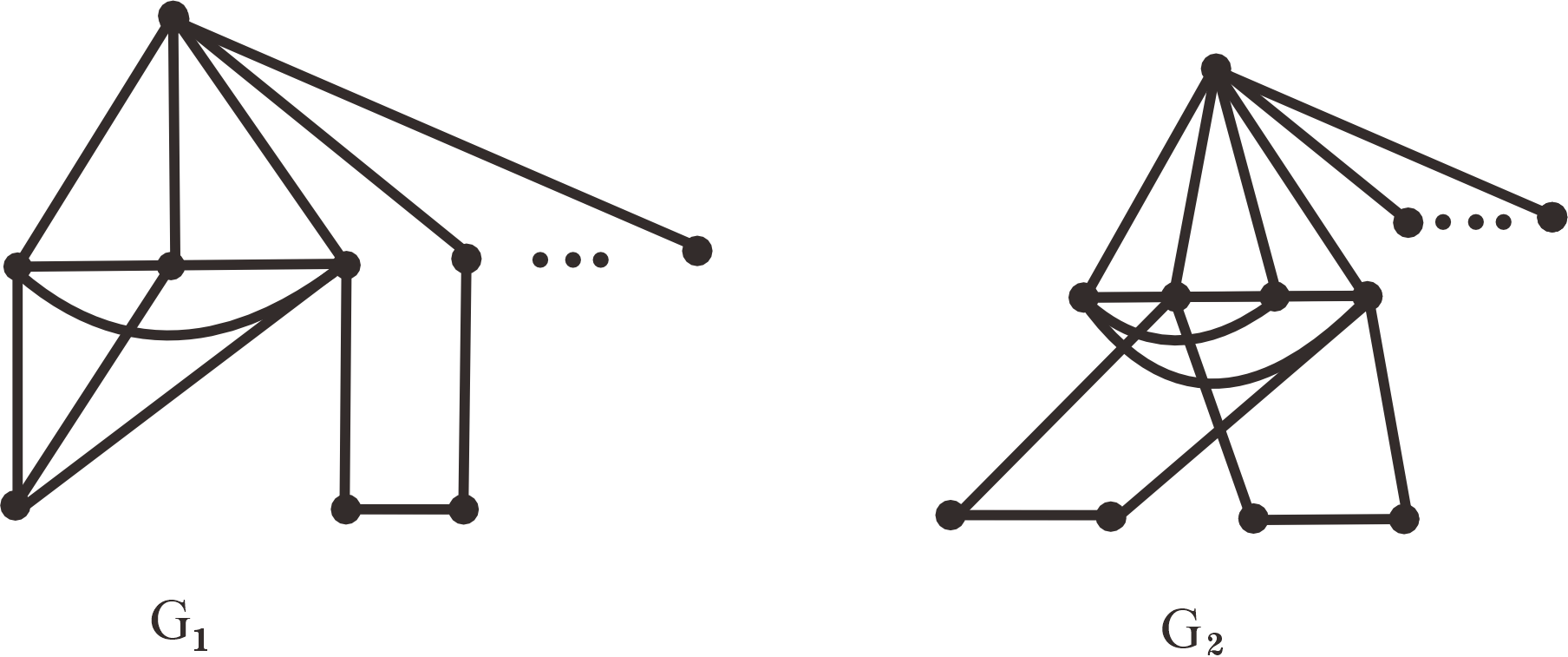}
  \caption{\small The graphs $G_{1}$ and $G_2$.}
  \label{fig2}
\end{figure}

\vspace{3mm}
\noindent \textbf{Subcase 2.2.} For each $w \in W$, $d_{A_+}(w) \leq 1$.
\vspace{3mm}

By Lemma \ref{lem2.3} and the present assumption, we know that $d_W(w) \geq 1$ for each $w \in W$. Together with $e(A_+,W) \geq 4$, we have $|W| \geq 4$ and $e(W) \geq 2$. Thus, $G[A_{+}] \in \{H_5.H_6\}$. 

If $G[A_{+}] \cong H_5 \cong K_4-e$, by Eq. (\ref{e(W)}), $e(W) = 2$, and we determine $G[W] \cong 2K_2$. For each $w \in W$, we have $d_{A_+}(w) = 1$. By Item \ref{item:second}, $d_{A_0}(w) = 0$. According to Items \ref{item:fourth} and \ref{item:fifth}, the two vertices of each $K_2$ of $G[W]$ has no a common neighbor in $A_+$, and they can only join the two vertices in the unique largest independent set of $A_+$ separately. Now the graph $G$ is determined, see Figure \ref{fig2}, $G_2$. Note that $A_0=\emptyset$ is possible.

Then, by Eq. (\ref{4}) and similar to Eq. (\ref{eq:contradiction-13}), we get \begin{equation*}
18<\left(\frac{1+\sqrt{4 m-5}}{2}\right)^2 -\frac{1+\sqrt{4 m-5}}{4}<\rho^2 -\frac{\rho}{2}<18,
\end{equation*}
again a contradiction. 

If $G[A_{+}] \cong H_6 \cong K_4$, then by Eq. (\ref{e(W)}), $e(W) \leq 3$. Further, we can determine $G[W] \in \{2K_2,3K_2,K_{1,3},P_4,K_2 \cup P_3\}$. There exist two vertices $w_1,w_2 \in W$ such that $w_1w_2 \in E(G)$ and $d_{A_+}(w_i) = 1$ for each $i \in [2]$, a contradiction to Item \ref{item:fifth}. 
This proves Claim \ref{claim:A+:1-6}.
\end{proof}

%Suppose that there exists a vertex $w' \in V(W)$ with $d_W(w')=0$. Then $d_{A_{+}}(w')\geq2$.

% Suppose $e(W)=0$. Then there is a vertex $w' \in V(W)$ such that $d_{A_{+}}(w')\geq2$. If $G[A_+] \cong H_i$ for some $i \in \{1,3,4,5,6\}$, then we can easily check that a subgraph of $H(4,3)$ is formed by $\{w',u^*\} \cup A_+$ in $G$, a contradiction. If $G[A_+] \cong H_2 \cong C_3$, then there are two vertices $w_1,w_2 \in V(W)$ such that $d_{A_{+}}(w_1) \geq 2$ and $d_{A_{+}}(w_2)\geq2$. However, we can check that a subgraph of $H(4,3)$ is formed by $\{w_1,w_2,u^*\} \cup A_+$ in $G$, also a contradiction.

% Suppose $e(W) \geq 1$. If $G[A_{+}]\cong H_{2}$. Due to  $e(A_{+},W)\geq4$, there are at least two vertices belong to $W$ are adjacent to the vertices in $A_{+}$. It is easy to see that  $H(4,3)$ is the subgraph of $G$, which leads to a contradiction. 
 
% If $G[A_{+}]\cong H_{3}, H_{4},  H_{5}$,  or  $H_{6}$. Then for any vertex $w$ in $W$, we have $d_{A_{+}}(w_{i})\leq1$. And any two adjacent vertices in $W$ can not adjacent to adjacent vertices in $A_{+}$. Thus we obtain $G$ is isomorphic to the graphs $G_{1}$ or $G_{2}$ as shown in Figure. \ref{fig2}.

\begin{claim}
\label{claim:H7}
    If $G[A_{+}]\cong H_{7}$, then $W =\emptyset$.
\end{claim}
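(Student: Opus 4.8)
The plan is to assume $W\neq\emptyset$ and contradict the maximality of $\rho(G)$. Since $G[A_{+}]\cong H_{7}$ is a star, write $A_{+}=\{c,\ell_{1},\dots,\ell_{t}\}$ with center $c$; then $e(A_{+})-|A_{+}|=-1$, so Eq.~(\ref{3}) already forces $e(W)=0$ and $\sum_{v\in A_{0}}x_{v}<\tfrac12 x_{u^{*}}$, and by Lemma~\ref{lem2.3} every $w\in W$ has $d(w)=d_{N}(w)\ge 2$. Substituting $e(W)=0$, $e(A_{+})=t$, $|A_{+}|=t+1$, and the edge count Eq.~(\ref{1}) into Eq.~(\ref{4}) rewrites it as $\rho^{2}-\tfrac{\rho}{2}<m+t-|A_{0}|-e(A_{0},W)$; since $\rho>\tfrac{1+\sqrt{4m-5}}{2}$ gives $\rho^{2}-\tfrac{\rho}{2}=(\rho^{2}-\rho)+\tfrac{\rho}{2}>m-\tfrac32+\tfrac{1+\sqrt{4m-5}}{4}>m+1$ for $m\ge 38$, we get $t\ge|A_{0}|+e(A_{0},W)+2\ge 2$. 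This excludes $t=1$; and if $t=2$, the same bound gives $A_{0}=\emptyset$, so (using $e(W)=0$) every $w\in W$ is joined to at least two of $c,\ell_{1},\ell_{2}$, while any two such vertices of $W$ sharing a common pair of neighbors would create an $H(4,3)$ — a $C_4$ through that pair together with the triangle $u^{*}c\ell_{k}$ on the third vertex — whence $|W|\le 3$ and $m=|N|+e(A_{+})+e(A_{+},W)\le 5+3|W|\le 14$, a contradiction. Thus $t\ge 3$.

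For $t\ge 3$ I would next pin down how $W$ attaches to $N$. Fix $w\in W$. If $w$ has two neighbors $a,a'\in A_{0}$, then $u^{*}\,a\,w\,a'\,u^{*}$ is a $C_4$ meeting the triangle $u^{*}c\ell_{1}$ only in $u^{*}$, so $d_{A_{0}}(w)\le 1$. If $w$ has two neighbors in $A_{+}$, they are either two leaves $\ell_{i},\ell_{j}$ — then $u^{*}\,\ell_{i}\,w\,\ell_{j}\,u^{*}$ and the triangle $u^{*}c\ell_{k}$ with $k\notin\{i,j\}$ (available since $t\ge 3$) form an $H(4,3)$ — or $c$ together with a leaf $\ell_{i}$ — then the triangle $c\,w\,\ell_{i}$ and the $C_4$ $u^{*}\,\ell_{j}\,c\,\ell_{k}\,u^{*}$ with $j,k\neq i$ distinct meet only in $c$, again an $H(4,3)$; hence $d_{A_{+}}(w)\le 1$. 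As $d_{N}(w)\ge 2$, it follows that $d_{N}(w)=2$ with a unique $a_{w}\in A_{0}$ and a unique $v_{w}\in A_{+}$ in $N(w)$; and $v_{w}=c$, for if $v_{w}=\ell_{i}$ then $u^{*}\,\ell_{i}\,w\,a_{w}\,u^{*}$ and the triangle $u^{*}c\ell_{j}$ ($j\neq i$) meet only in $u^{*}$. Hence $N(w)=\{c,a_{w}\}$ for every $w\in W$; in particular $A_{0}\neq\emptyset$.

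Finally I would apply a switch. Let $G'$ be obtained from $G$ by deleting $wa_{w}$ and adding $wu^{*}$ for every $w\in W$ (genuinely new edges, since $w\notin N[u^{*}]$); this preserves the size $m$ and leaves no isolated vertex. In $G'$ the vertex $u^{*}$ is adjacent to all of $N\cup W$, and the only edges inside $N\cup W$ are the $c\ell_{i}$ and the $cw$, so $G'\cong u^{*}\vee\bigl(K_{1,\,t+|W|}\cup|A_{0}|K_{1}\bigr)=F'_{m,|A_{0}|}$; since in $F'_{m,l}$ every triangle and every $C_4$ contains both $u^{*}$ and its star center, $G'$ is $H(4,3)$-free. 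Evaluating the quadratic form of $A(G')$ on the Perron vector $x$ of $G$ gives
\[
x^{T}A(G')x-x^{T}A(G)x=2\sum_{w\in W}x_{w}\bigl(x_{u^{*}}-x_{a_{w}}\bigr)>0,
\]
because $x_{a_{w}}\le\sum_{v\in A_{0}}x_{v}<\tfrac12 x_{u^{*}}$ and $W\neq\emptyset$; hence $\rho(G')>\rho(G)$, contradicting the choice of $G$. Therefore $W=\emptyset$.

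I expect the main difficulty to be the structural step for $t\ge 3$: one must run through every way a vertex of $W$ can be joined to $A_{+}\cup A_{0}$ and, in each case, extract a triangle and a $C_4$ — through $u^{*}$, or through $c$ — meeting in exactly one vertex. Each such gadget consumes a "free" leaf, which is precisely why the boundary cases $t\in\{1,2\}$, where no spare leaf is available, have to be cleared away first by the coarser estimate coming from Eq.~(\ref{4}).
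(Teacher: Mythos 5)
Your proof is correct, and in the main case it coincides with the paper's: for $t\ge 3$ you show that every $w\in W$ satisfies $N(w)=\{c,a_w\}$ with $a_w\in A_0$ and then reroute all edges $wa_w$ to $u^*$, which is precisely the paper's structure $G_4$ and its switch to $G''$; you even supply two details the paper leaves implicit, namely an explicit check that the switched graph is $H(4,3)$-free and a direct Rayleigh-quotient proof of $\rho(G')>\rho(G)$ using the bound $\sum_{v\in A_0}x_v<\frac{1}{2}x_{u^*}$ extracted from Eq.~(\ref{3}). Where you genuinely diverge is the treatment of $t\in\{1,2\}$: the paper first proves $A_0\neq\emptyset$, then bounds $d_N(w)\le 3$, removes the exceptional $d_N(w)=3$ vertex by a local switch (for $t=1$) or by identifying the graph $G_3$ (for $t=2$), and feeds $e(A_+,W)\le 2|W|$ resp.\ $|W|+1$ into Eq.~(\ref{5}); you instead substitute Eq.~(\ref{1}) into Eq.~(\ref{4}) to rewrite the bound as $\rho^2-\frac{\rho}{2}<m+t-|A_0|-e(A_0,W)$, compare with $\rho^2-\frac{\rho}{2}>m+1$ (valid for $m\ge 38$), and conclude $t\ge |A_0|+e(A_0,W)+2$, which rules out $t=1$ at once and reduces $t=2$ to $A_0=\emptyset$, finished by a pigeonhole on the three pairs inside $\{c,\ell_1,\ell_2\}$ giving $|W|\le 3$ and $m\le 14$. (One cosmetic slip there: when the shared pair is $\{\ell_1,\ell_2\}$, the triangle $u^*c\ell_k$ meets the $C_4$ in $\ell_k$ rather than in the third vertex, but the intersection is still a single vertex, so the conclusion stands.) Your handling of the small cases is shorter and avoids both the preliminary step $A_0\neq\emptyset$ and the auxiliary extremal configuration $G_3$, at the cost of a slightly more careful algebraic bookkeeping; the paper's route works only with the cruder inequality~(\ref{5}) but pays for it with more structural casework.
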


%\zg{It is also can be divided into several claims to deduce $G \cong G_i$ for $i \in \{3,4,5\}$.}
\begin{proof}
For a contradiction, suppose that $W \neq \emptyset$.
Since $G[A_{+}]\cong H_{7} \cong K_{1,t}$ for some $t \geq 1$, we have $e\left(A_{+}\right)-\left|A_{+}\right|=-1$. By Eq. (\ref{3}), we obtain $e(W)=0$, and according to Eq. (\ref{4}),
\begin{equation}\label{5}
\rho^2 -\frac{\rho}{2}<3t+1+e(A_{+},W).
\end{equation}

First, we prove that $A_{0}\neq\emptyset$. Then we will get $|N| > t + 1$.  Assume that $A_{0}=\emptyset$. For the case $t =1$, since $m \geq38$ and $e(W) = 0$, by Lemma \ref{lem2.3}, we can deduce that $x_{v}>x_{u^*}$ for each vertex $v\in N(u^*)$. This is a contradiction to $x_{u^*}=\max \left\{x_u:\right.$ $\left.u \in V\left(G\right)\right\}$. For the case $t = 2$, by Lemma \ref{lem2.3}, we get $|W| \leq1$, and now the graph $G$ can only have at most 8 edges, a contradiction to $m\ge 38$. It remains to consider the case $t\geq3$. To forbid the subgraph $H(4, 3)$ in $G$, each vertex in $W$ can only have at most one neighbor in $N(u^*)$. Combining with Lemma \ref{lem2.3}, we have $W=\emptyset$, again a contradiction to $W \neq \emptyset$. 

Thus we have $|N| > t + 1$. If $t=1$, then let $v_1 v_2$ be the unique edge of $G[N]$. Observe that $d_{A_0}(w) \leq 1$ for any vertex $w \in W$, and thus, $d_{A_{+}}(w) \geq 1$ and $d_N(w)\leq3$. Moreover, there is at most one vertex $w \in W$ with $d_N(w)=3$. Assume that there exists a vertex $w \in W$ with $d_N(w)=3$, and let $N(w)=\left\{u, v_1, v_2\right\}$ where $u \in A_0$. Consider the graph $G'=G-w u+w u^*$. It is easy to see that $G'$ has size $m$ and satisfies the condition of Lemma \ref{lem:Nikiforov}, so that $\rho(G')>\rho(G)$ and we obtain a contradiction. By Lemma \ref{lem2.3}, we may assume that $d_N(w)=2$ for any $w \in W$.  

Furthermore, we get $e(N, W)=2|W|$. It follows that $e(A_{+},W)\leq2|W|$ and 
$$
|W|=\frac{m-|N|-1}{2}.
$$
Using Eq. (\ref{5}), we have
$$
\rho^2 -\frac{\rho}{2}<3+1+(m-|N|-1).
$$
However, due to $|N| \geq 3$ and $m\geq 38$,
$$
3+1+(m-|N|-1)<\frac{(1+\sqrt{4 m-5})\sqrt{4 m-5}}{4},
$$
and we will obtain 
$$
\rho^2 -\frac{\rho}{2}<\frac{(1+\sqrt{4 m-5})\sqrt{4 m-5}}{4},
$$
which contradicts  $\rho >\frac{1+\sqrt{4 m-5}}{2}$.

If $t=2$, then $d_N(w)\leq3$ for any vertex $w \in W$. Assume that there exists a vertex $w \in W$ with $d_N(w)=3$. Now the graph $G$ can only be isomorphic to $G_3$, see Figure \ref{fig33}. By Eq. (\ref{5}), we obtain
$$
\rho^2 -\frac{\rho}{2}<10.
$$
Since $m\geq 38$ and $\rho >\frac{1+\sqrt{4 m-5}}{2}$, we get a contradiction that
$$
10<\left(\frac{1+\sqrt{4 m-5}}{2}\right)^2 -\frac{1+\sqrt{4 m-5}}{4}<\rho^2 -\frac{\rho}{2}<10.
$$

\begin{figure}[H]
  \centering
    \includegraphics[width=3cm]{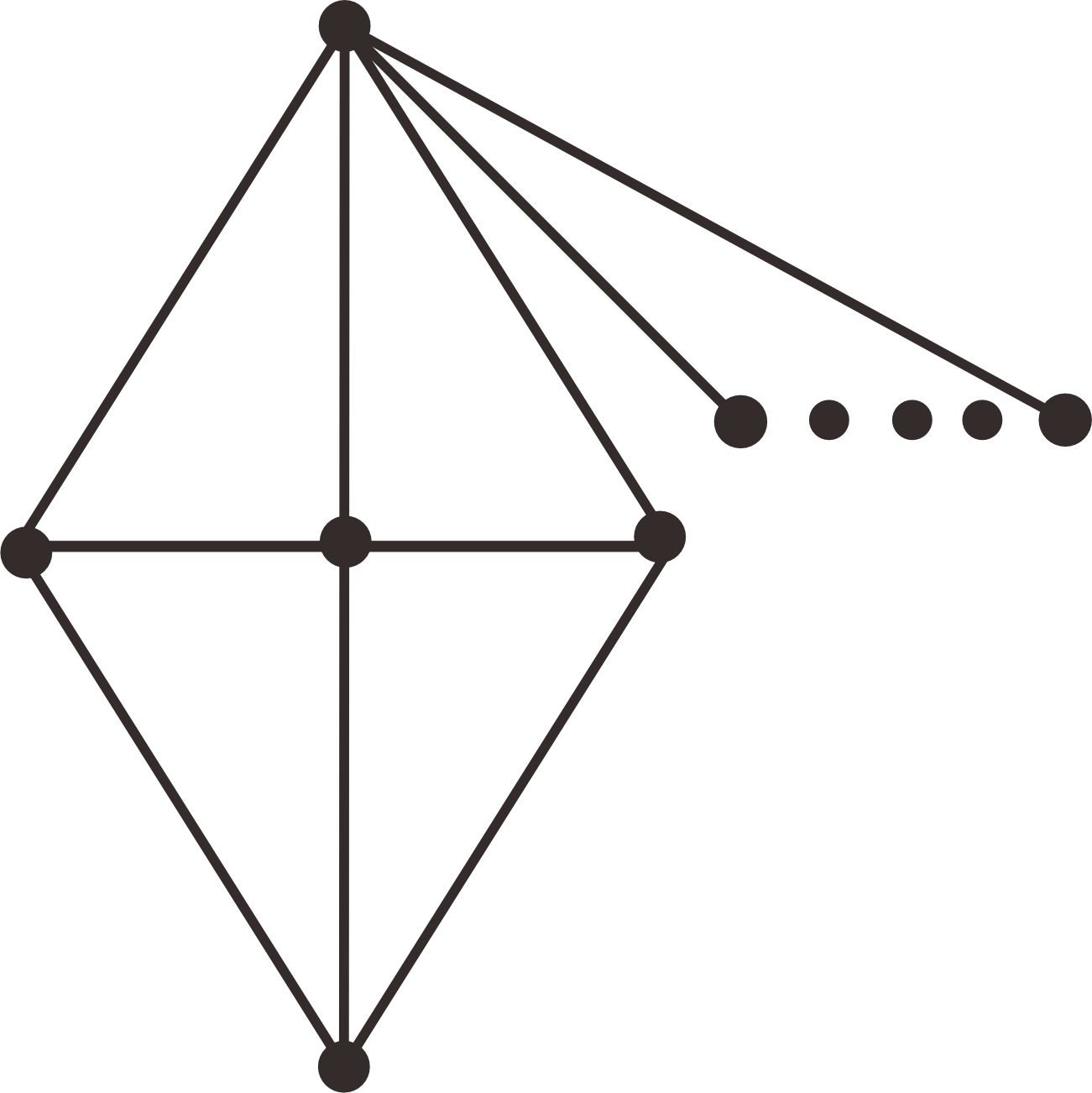}
  \caption{\small The graph $G_{3}$.}
  \label{fig33}
\end{figure}

We have $d_N(w)=2$ for any $w \in W$. By $e(W)=0$ and Lemma \ref{lem2.3}, it deduces that $m-|N|-2=e(N, W)=2|W|$, and we get
$$
|W|=\frac{m-|N|-2}{2}.
$$
It is easy to check that there is at most one vertex $w$ in $W$ with $d_{A_+}(w)=2$, based on the forbidden $H(4,3)$ in $G$. Thus, we have $e(A_{+},W)\leq|W|+1$. According to Eq. (\ref{5}), we get
$$
\rho^2 -\frac{\rho}{2}<6+1+\frac{m-|N|-2}{2}+1<\frac{(1+\sqrt{4 m-5})\sqrt{4 m-5}}{4}
$$
due to $|N| \geq 3$ and $m\geq 38$, which is a contradiction to $\rho >\frac{1+\sqrt{4 m-5}}{2}$.

\begin{figure}[H]
  \centering
    \includegraphics[width=5cm]{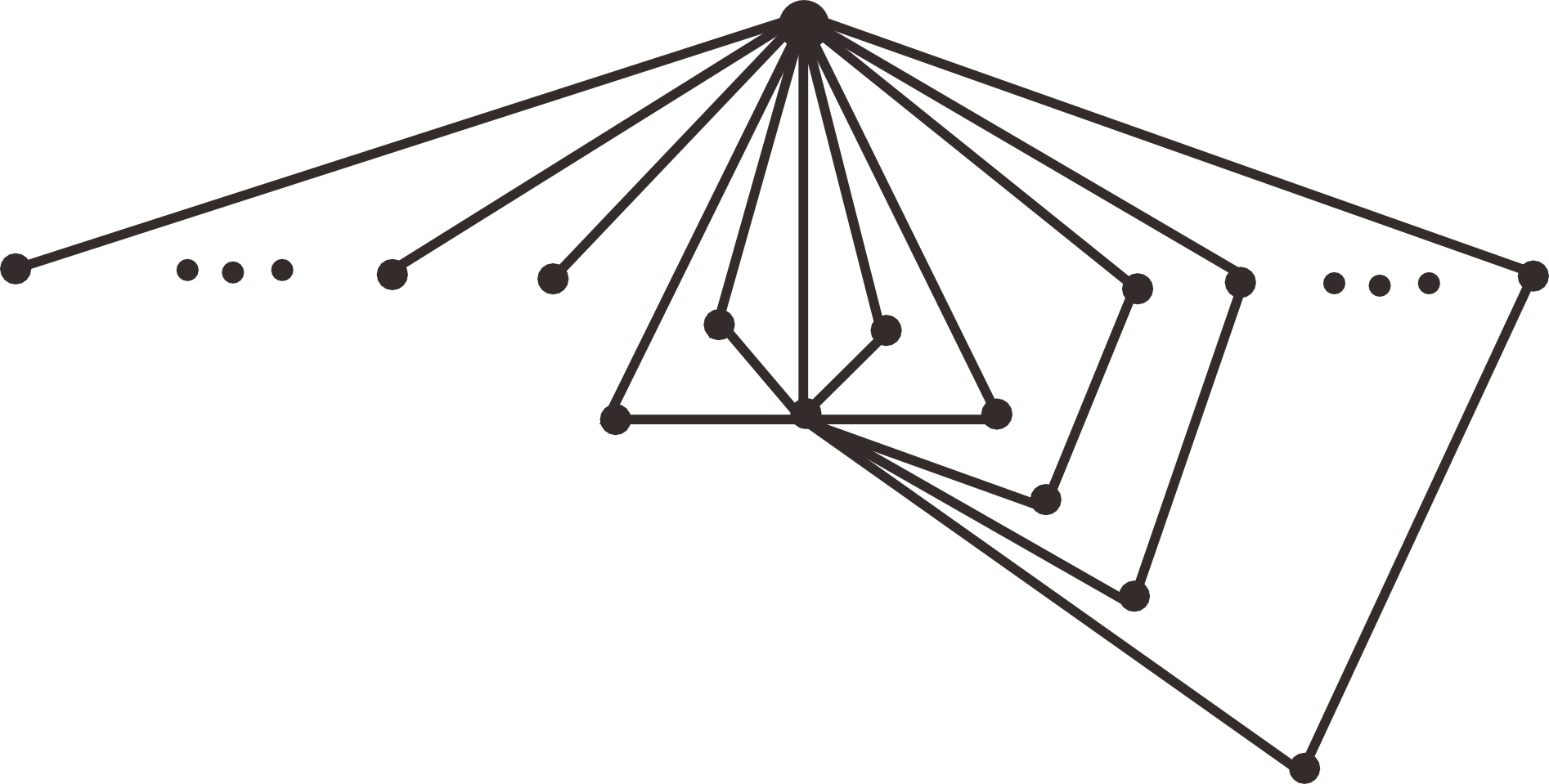}
  \caption{\small The graph $G_{4}$.}
  \label{fig4}
\end{figure}

For the case $t\geq3$, since $W\neq\emptyset$ and $G$ dose not contain $H(4,3)$ as a subgraph, the graph $G$ must isomorphic to $G_4$, see Figure \ref{fig4}. Let $W=\{w_{1}, \ldots, w_{s}\}$, and let $N(w_{i})=\left\{v, u_i\right\}$ for $i \in [s]$, where $v$ is the center of $G[A_+]$ and $u_i \in A_0$. Then the graph $G''=G_{4}-w_{1}u_{1}-\ldots-w_{s}u_{s}+w_{1}u^*+\dots+w_{s}u^*$ has size $m$ and satisfies the condition of Lemma \ref{lem:Nikiforov}. Hence we have $\rho(G'')>\rho(G_{4})=\rho(G)$, and this leads to a contradiction again.
\end{proof}
By Claim \ref{claim:H7}, we conclude that $G[A_{+}]\cong G_{7}$ and $W=\emptyset$. Now the graph $G$ is determined, that is, $G \cong F^{\prime}_{m,l}$ for some integer $l \geq 0$. Since $m$ is even, $l \geq 1$ and $l=m-1-2t$ is odd. By Lemma \ref{lem:F_m,l}, we know that $\rho(F^{\prime}_{m,l})\leq \rho(F^{\prime}_{m,1})$ with equality if and only if $l=1$. Then we obtain the extremal graph $G \cong F^{\prime}_{m,1} \cong S_{\frac{m+4}{2}, 2}^{-}$. This completes the proof of Theorem \ref{thm:even-case}.

%\section{Concluding remarks}

%\zg{Set or not?}

%When $m$ is even, we may further consider the spectral extrema result for $F_6$ or $H(4,3)$ or $H^+(3,4)$. It is unresolved. May be added as Section.

%We will keep it as a future work.

\section{Conclusions}

Recall that Li, Lu and Peng \cite{LiLuPeng2023} and Chen et al. \cite{Chen2025} determined the maximum spectral radius of $H(3,3)$-free graphs with given odd and even size, respectively. The spectral extrema result for the graph $F_5$ by Gao and Li \cite{Gao2026} implies the result for $H(4,3)$-free graphs with given odd size. In this paper, we prove that if $G$ is an $H(4,3)$-free graph of even size $m \geq 38$ without isolated vertices, then $\rho(G) \leq \rho^{\prime}(m)$ with equality if and only if $G \cong S_{\frac{m+4}{2}, 2}^{-}$, where $\rho^{\prime}(m)$ is the largest root of $x^4-m x^2-(m-2) x+\frac{m}{2}-1=0$. It is natural to consider the following problem.

\begin{problem}
    Let $k,l \geq 3$ be two integers. Determine the exact maximum spectral radius of an $F_5$-free graph of given even size $m$.
\end{problem}

\end{document}